\theoremstyle{plain}
\newtheorem{thm}{Theorem}[section]
\newtheorem{lem}[thm]{Lemma}
\newtheorem*{thm*}{Theorem}
\newtheorem{abcthm}{Theorem}
\theoremstyle{definition}
\newtheorem{defn}[thm]{Definition}
\newtheorem*{exam*}{Example}
\newtheorem{exam}[thm]{Example}
\newtheorem{rem}[thm]{Remark}
\def\Z{\mathbb Z}
\def\R{\mathbb R}
\def\Q{\mathbb Q}
\def\D{D}
\def\link{\mathcal{E}}
\def\rlink{\mathcal{R}}
\def\prlink{\mathcal{PR}}
\def\Sa{\mathbb S}
\def\Hy{\mathbb H}
\newcommand{\Mob}{\text{M\"ob}}
\DeclareMathOperator\Int{Int}
\DeclareMathOperator\im{Im}
\DeclareMathOperator\Isom{Isom}
\DeclareMathOperator\emb{Emb}
\DeclareMathOperator\Diff{Diff}
\DeclareMathOperator{\Homeo}{Homeo}
\DeclareMathOperator\remb{REmb}
\DeclareMathOperator{\arc}{arc}
\DeclareMathOperator{\coll}{Coll}
\DeclareMathOperator{\Gr}{Gr}
\DeclareMathOperator\SO{SO}
\DeclareMathOperator\SU{SU}
\DeclareMathOperator{\Or}{O}
\DeclareMathOperator{\mob}{\rm{M\ddot{o}b}}
\newcommand{\fakeenv}{} 
\newenvironment{restate}[2]  
{
  \renewcommand{\fakeenv}{#2}   
  \theoremstyle{plain}
  \newtheorem*{\fakeenv}{#1~\ref{#2}} 
  \begin{\fakeenv}  
}
{ \end{\fakeenv} }
\definecolor{champagne}{rgb}{0.97, 0.91, 0.81}
\definecolor{burlywood}{rgb}{0.55, 0.71, 0.0}
\definecolor{cfblue}{rgb}{0.39, 0.58, 0.93}
\definecolor{amber}{rgb}{1.0, 0.75, 0.0}
\title{The embedding space of a Hopf link}
\author{Rachael Boyd}
\address{School of Mathematics and Statistics, University of Glasgow, Glasgow G12 8QQ, UK}
\email{rachael.boyd@glasgow.ac.uk}
\urladdr{https://www.maths.gla.ac.uk/~rboyd/} 
\author{Corey Bregman}
\address{Department of Mathematics, Tufts University, Medford, MA 02155, USA}
\email{corey.bregman@tufts.edu}
\urladdr{https://sites.google.com/view/cbregman} 
\subjclass[2020]{58D10, 
55P15
}
\begin{document}
\maketitle
\begin{abstract}We study the unparametrised smooth embedding space of a Hopf link in~$\R^3$, and prove that it is is homotopy equivalent to the closed 3-manifold $S^3/\Q_8$. As an intermediate step in the proof, we show that the inclusion of the subspace of round embeddings is a homotopy equivalence. We provide analogous results for the unparametrised smooth embedding space of a Hopf link in~$S^3$, which we show is homotopy equivalent to $\R P^2\times \R P^2$.
\end{abstract}

\section{Introduction} \label{sec: intro}

Let $H\colon S^1\sqcup S^1\hookrightarrow \R^3$ be the smooth 2-component link whose image is the union of the unit circle centered at the origin in the $xy$-plane and the unit circle centered at $(1,0,0)$ in the $xz$-plane. The isotopy class of $H$ gives the simplest example of a nontrivial 2-component link, and is known as the \emph{Hopf link}. We study the homotopy type of the unparametrised smooth embedding space of the Hopf link in~$\R^3$, i.e.~the connected component of
$$
\emb (S^1 \sqcup S^1, \R^3)/\Diff (S^1 \sqcup S^1)
$$
containing $H$. Denote this space by~$\link(H)$.
Our main result shows that $\link(H)$ is homotopy equivalent to a finite CW complex and provides an explicit finite CW model. Regarding $S^3$ as the Lie group $\SU(2)$ of unit quaternions, let $\Q_8$ denote the subgroup $\langle\pm1,  \pm i,\pm j,\pm k\rangle$ and let~$S^3/\Q_8$ be the prism-manifold quotient. We prove the following theorem.

\begin{abcthm}\label{abcthm: homotopy type}
The embedding space $\link(H)$ is homotopy equivalent  to $S^3/\Q_8.$ 
\end{abcthm}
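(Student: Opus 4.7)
The plan is to establish the theorem in the two steps outlined in the abstract. Let $\rlink(H) \subset \link(H)$ denote the subspace of unparametrised \emph{round} Hopf links---those whose two components are round circles. I would first show that the inclusion $\rlink(H) \hookrightarrow \link(H)$ is a homotopy equivalence, and then identify $\rlink(H)$ explicitly with $S^3/\Q_8$.

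The first step is the principal technical obstacle and the geometric heart of the argument. My plan would be to construct a canonical, continuous ``roundification'' providing a deformation retraction of $\link(H)$ onto $\rlink(H)$, by analogy with the classical Hatcher--Smale style result that the round unknots form a homotopy equivalent subspace of the unknot component. Two natural approaches suggest themselves: (i) exploit the identification of the Hopf link complement with $T^2 \times \R$ and apply a parametrised isotopy-extension argument that straightens the two components simultaneously while respecting the linking; or (ii) first normalise each Hopf link to a Möbius-standard position and then straighten. The delicate point in either approach is to carry out the straightening canonically across the entire moduli of Hopf embeddings and compatibly with the reparametrisation group $\Diff(S^1 \sqcup S^1)$.

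Granted Step 1, the identification of $\rlink(H)$ is explicit. An ordered oriented round Hopf link determines a pair of oriented plane-normals $(n_1, n_2) \in S^2 \times S^2$ with $n_1 \neq \pm n_2$, and forgetting the centres and radii is a homotopy equivalence from each of the two connected components of the ordered oriented round Hopf link space (one component for each sign of the linking number) onto $(S^2 \times S^2) \setminus (\Delta \cup \bar\Delta)$, where $\Delta, \bar\Delta$ are the diagonal and antidiagonal. The latter is a circle bundle over $S^2$ of Euler number $2$---in fact the unit tangent bundle of $S^2$---so it is homotopy equivalent to $\SO(3)$. Thus the ordered oriented round Hopf link space has the homotopy type of $\SO(3) \sqcup \SO(3)$.

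To finish, quotient by $\pi_0\Diff(S^1 \sqcup S^1) \cong D_4$. The four cosets that flip the linking sign interchange the two copies of $\SO(3)$; the index-two subgroup $V_4 \subset D_4$ of linking-preserving symmetries---generated by the swap and by simultaneous orientation reversal of both components---acts on each copy of $\SO(3)$ by right translation through the standard embedding of $V_4$ as the Klein four group of $180^\circ$-rotations about the three coordinate axes. This action is free, so
$$
\rlink(H) \;\simeq\; \SO(3)/V_4 \;\cong\; S^3/\Q_8,
$$
where the last identification comes from the double cover $\SU(2) \twoheadrightarrow \SO(3)$ lifting $V_4$ to the quaternion group $\Q_8 \subset \SU(2) = S^3$. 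Combined with Step~1, this proves the theorem; the expected main obstacle is thus Step~1, the rest being an explicit but careful homogeneous-space computation.
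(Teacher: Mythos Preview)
Your proposal is correct and matches the paper's strategy: first reduce to the round subspace $\rlink(H)$ (Theorem~\ref{abcthm: round space equivalence}, which you rightly flag as the main technical step), then identify the ordered, oriented, linking-$+1$ cover with $\SO(3)$ equivariantly for the Klein four group $G=\langle\alpha,s\rangle$ and conclude $\rlink(H)\simeq\SO(3)/G\cong\SU(2)/\Q_8=S^3/\Q_8$. The only superficial difference is that the paper reaches $\SO(3)$ by an explicit $G$-equivariant deformation retraction onto a subspace of standardised Hopf links parametrised by the orthonormal frame $(v,n_1,n_2)$ (Lemma~\ref{lem:PRHL Homotopy type}), whereas you pass through the normals map to $(S^2\times S^2)\setminus(\Delta\cup\bar\Delta)$; your assertion that forgetting centres and radii is a homotopy equivalence amounts to exactly that retraction, and the $G$-equivariance you need is what the paper verifies carefully.
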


The corresponding \emph{round} embedding space, which we denote~$\rlink(H)$, is the subspace of $\link(H)$ for which each component is a round circle lying in an affine hyperplane $\R^2\subset \R^3$. To prove Theorem \ref{abcthm: homotopy type}, we first reduce to studying this subspace.

\begin{abcthm}\label{abcthm: round space equivalence}
The inclusion map $\rlink(H)\hookrightarrow \link(H)$ is a homotopy equivalence.
\end{abcthm}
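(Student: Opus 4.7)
The plan is to deform $\link(H)$ onto $\rlink(H)$ by straightening the two components of the Hopf link in turn, reducing at each stage to a classical straightening theorem for a single unknotted circle.

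First, pass to the double covers $\wt{\link}(H)\to\link(H)$ and $\wt{\rlink}(H)\to\rlink(H)$ of \emph{ordered} Hopf links, arising from the component-swap $\Z/2$-subgroup of $\Diff(S^1\sqcup S^1)$. The inclusion of round into general Hopf links is $\Z/2$-equivariant and the $\Z/2$-actions are free, so it suffices to show that the lifted inclusion $\wt{\rlink}(H)\hookrightarrow\wt{\link}(H)$ is a homotopy equivalence.

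Next, consider the Serre fibrations obtained by recording the first component,
\[
p\colon \wt{\link}(H)\longrightarrow X, \qquad p_R\colon\wt{\rlink}(H)\longrightarrow X_R,
\]
where $X$ is the unknot component of $\emb(S^1,\R^3)/\Diff(S^1)$ and $X_R\subset X$ is the subspace of round circles; both maps are fibrations by parametrised isotopy extension. By Hatcher's theorem on the space of smooth unknots in $\R^3$ (a consequence of his proof of the Smale conjecture), the inclusion $X_R\hookrightarrow X$ is a homotopy equivalence. Via the long exact sequences of $p$ and $p_R$ and the five-lemma, it then suffices to prove that for one (equivalently every) round circle $C_1\in X_R$, the inclusion on fibres $p_R^{-1}(C_1)\hookrightarrow p^{-1}(C_1)$ is a homotopy equivalence.

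Fix such a $C_1$. The fibre $p^{-1}(C_1)$ is the unparametrised embedding space of a circle in the open solid torus $\R^3\setminus C_1$ representing a generator of $\pi_1$, while $p_R^{-1}(C_1)$ consists of those embeddings whose image is a round circle in the ambient $\R^3$. I would compare these by applying a Möbius inversion of $S^3$ centred at a point of $C_1$, carrying $C_1$ to a straight line $\ell\subset\R^3$ and preserving both roundness and linking; the problem then reduces to a second Hatcher-style straightening in the presence of the axis of symmetry $\ell$. This fibre comparison is the main obstacle, because the analogue of Hatcher's theorem in the complement of a line is more delicate than in all of $\R^3$: "round" is an \emph{extrinsic} condition inherited from the ambient space rather than something intrinsic to the solid torus. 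I expect to exploit the large conformal symmetry group of $(\R^3,\ell)$, which preserves both fibres and acts transitively on the round one, to reduce to a contractibility claim for the stabiliser of a single round circle linking $\ell$.
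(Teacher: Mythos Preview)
Your reduction via the ordered double cover and the first-component fibration is sound, and indeed mirrors the paper's own two-stage structure: round $L_1$ first, then deal with $L_2$ in its complement. One small correction: the map $p_R$ is a fibration, but not ``by isotopy extension''---ambient isotopy extension does not keep $C_2$ round. You should instead lift a path of round circles $C_1(t)$ to a path of similarities of $\R^3$ (possible since the similarity group acts transitively on $X_R$), and apply those to $C_2$.

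The genuine gap is the fibre comparison, which you correctly flag as the crux and then do not prove. After your M\"obius inversion the question becomes: is the inclusion of round circles linking a fixed line $\ell$ once into all smooth unknotted circles linking $\ell$ once a homotopy equivalence? Your proposed mechanism---exploit that $\Mob^+(\R^3,\ell)$ acts transitively on the round fibre and reduce to a stabiliser computation---does not close the argument, because the conformal group does \emph{not} act transitively on the smooth fibre, so there is no homogeneous-space comparison available on that side. You are left needing exactly the kind of statement you started with, now in an open solid torus rather than in $\R^3$, and no off-the-shelf Hatcher theorem covers it.

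The paper confronts this same difficulty head-on but by a different method: rather than abstract fibre comparison, it works directly with relative homotopy classes $f\colon(D^k,\partial D^k)\to(\link(H),\rlink(H))$. After rounding $L_1(t)$, it builds a parametrised isotopy of $L_2(t)$ using a family of ellipsoids $E_h$ with equator $L_1(t)$: one removes arc intersections of a spanning disc $D_2(t)$ with $E_h$ innermost-first via collar pushes, then straightens the remaining arc through the ellipsoid, and finally shrinks $L_1(t)$ so that a second application of the unknot-rounding theorem rounds $L_2(t)$ without disturbing $L_1(t)$. Continuity over $D^k$ is obtained from contractibility of collar spaces and of the space of unknotted arcs joining the two hemispheres. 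This is the substantive content your sketch is missing; the fibration framework repackages the problem cleanly but does not bypass it.
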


In the final section of the paper we prove analogous results for the unparametrised embedding space of a Hopf link in $S^3$, which we denote by $\link(H,S^3)$. In this setting, the inclusion of a round subspace is also a homotopy equivalence, as we show in Theorem~\ref{thm: equiv great and smooth hl in s3}, and has the homotopy type of an explicit finite manifold.

\begin{abcthm}\label{abcthm: S3 homotopy type}
    The embedding space $\link(H,S^3)$ is homotopy equivalent to $\R P^2\times \R P^2.$
\end{abcthm}

Recall that Hatcher \cite[Appendix]{Hatcher} showed that the unparametrised embedding space of an unknot in $S^3$ is homotopy equivalent to the space of great circles in $S^3$, which can be identified with the Grassmannian $\Gr_2(\R^4)$. 
To prove Theorem \ref{abcthm: S3 homotopy type}, we show that $\link(H,S^3)$ is homotopy equivalent to a quotient of $\Gr_2(\R^4)$ by the involution which exchanges $V$ and $V^\perp$. It is interesting that, up to homotopy, the space of Hopf links is naturally a quotient of the space of unknots in this way.

Damiani and Kamada computed the fundamental group of~$\rlink(H)$ to be the quaternion group~$\Q_8$ \cite{DamianiKamada}, and we independently obtain that the (smooth) \emph{motion group} $\pi_1(\link(H))$ is isomorphic to $\Q_8$. This agrees with Goldsmith's computation of the \emph{topological motion group} $\pi_1(\Homeo(\R^3), \Homeo(\R^3,L))$ of torus links \cite{Goldsmith} where we view $H$ as the torus link $T(2,2)$. We therefore conclude that the smooth and topological motion groups are the same. By~\cite[Corollary B]{BoydBregman} the motion group in $S^3$ agrees with the motion group in $\R^3$.

In \cite{BrendleHatcher}, Brendle--Hatcher studied the unparametrised embedding space $\link(U_n)$, where $U_n$ is the~$n$-component unlink in~$\R^3$. They also prove that the inclusion of the subspace $\rlink(U_n)$ of round embeddings is a homotopy equivalence. As a consequence, one knows that $\link(U_n)$ has the homotopy type of a finite-dimensional manifold. Our work is inspired by this, in the setting of a single Hopf link.

In previous work we study the homotopy type of the unparametrised embedding space $\link(L)$ of a \emph{split link} $L$ in $\R^3$ \cite{BoydBregman}. We show that $\link(L)$ is homotopy equivalent to a space with extra structure, and thus provide an iterated semi-direct product formula for the {motion group} $\pi_1(\link(L))$. When $L$ is a union of unknot and Hopf link components, the results of \cite{BoydBregman} are combined with Theorem~\ref{abcthm: round space equivalence} to derive a formula for the motion group, given in \cite[Example 4.12]{BoydBregman}.

\subsection*{Organisation}
In Section \ref{sec: preliminaries} we set up definitions and notation, and collect results on the homotopy type of some embedding spaces for later use. We prove Theorem~\ref{abcthm: round space equivalence} in Section~\ref{sec: round equiv smooth} and Theorem~\ref{abcthm: homotopy type} in Section~\ref{sec: homotopy type}. In Section~\ref{sec: great hopf links} we consider the space of Hopf links in $S^3$ and prove the two analogous theorems. 

\subsection*{Acknowledgments}
This work was motivated by a talk of Damiani at the 2019 KaBiN conference in Trondheim, Norway.
We would like to thank Kiyoshi Igusa, Jean-Fran\c{c}ois Lafont and Arunima Ray for helpful conversations. 
The first author was partially supported by the Max Planck Institute for Mathematics in Bonn, ERC grant No.~756444, and EPSRC Fellowship No.~EP/V043323/1 and EP/V043323/2. 
The second author was supported by NSF grant DMS-2401403.

We would like to thank the Isaac Newton Institute for funding the satellite programme \emph{Topology, representation theory and higher structures} based at Gaelic College, Sabhal Mòr Ostaig, Isle of Skye, where part of this work was undertaken.

\section{Setting up notation and preliminary lemmas}\label{sec: preliminaries}

In this section we establish notation and define the spaces which we will work with throughout the paper. We then collect some important results concerning the topology of embedding spaces that we will need.

\subsection{Definitions and notation}
Let $\emb(M,N)$ be the space of smooth embeddings of~$M$ into~$N$, with the $C^\infty$ topology, and let $\emb_\partial(M,N)$ be the subspace of smooth embeddings of~$M$ into~$N$ that restrict to a fixed embedding on~$\partial M$, i.e.~$\partial M$ is a pointwise fixed subset of~$\partial N$. All embeddings we consider will be proper, i.e.~map boundaries to boundaries and interiors to interiors. 

Let $\Diff(M)$ be the diffeomorphism group of $M$ equipped with the $C^\infty$ topology. For $M\subset N$ let $\Diff_M(N)$ be the subgroup of diffeomorphisms of $N$ which restrict to the identity map on $M$, and let $\Diff(N,M)$ be the subgroup of diffeomorphisms of $N$ which fix $M$ setwise, i.e.~$f(M)=M$. If $M$ has boundary, let $\Diff_\partial(M)=\Diff_{\partial M}(M)$.

\begin{exam}\label{example:Basic Hopf}
The basic model for the Hopf link is the embedding  $H\colon \sqcup_2 S^1\hookrightarrow \R^3$ with image 
\begin{eqnarray*}
\text{Component }1: & x^2+y^2=1,& z=0\\
\text{Component }2: & (x-1)^2 +z^2=1,& y=0.
\end{eqnarray*}
See Figure~\ref{fig:H}. We will refer to any link that is isotopic to $H$ as a Hopf link.
\end{exam}
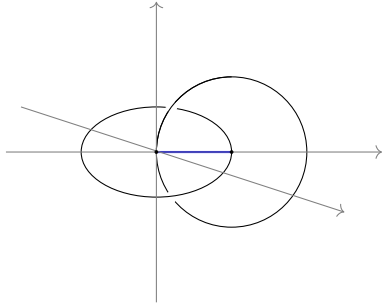
\begin{figure}[h!]
        \centering
        \begin{tikzpicture}
            \draw (1,0) ellipse (1 and 1);                  
            \filldraw[white](0.2,-0.6) circle (2pt);
            \draw (0,0) ellipse (1 and 0.6);                
            \filldraw[white](0.2,0.6) circle (2pt);
            \draw (1,1) arc[start angle=90, end angle=180, radius=1];
            \draw[thick,blue] (0,0)--(1,0);
            \draw[->,gray] (-2,0)--(3,0);
            \draw[->,gray] (0,-2)--(0,2);
            \draw[->,gray] (-1.8,0.6)--(2.5,-0.8);
            \filldraw(0,0) circle (.5pt);
            \filldraw(1,0) circle (.5pt);
        \end{tikzpicture}
        \caption{The basic model for the Hopf link, $H$, which serves as the basepoint in $\link(H)$.}
        \label{fig:H}
    \end{figure}
    
\begin{defn}\label{defn: HL emb space}
	Let $\link(H)$ be the connected component of
	$$
	\emb (S^1 \sqcup S^1, \R^3)/\Diff (S^1 \sqcup S^1)
	$$
	containing $H$. Working modulo the diffeomorphism group implies that each embedded~$S^1$ is unoriented and unparametrised, and the components of the link are unordered. We therefore refer to  $\link(H)$ as the \emph{unparametrised embedding space of Hopf links in $\R^3$}.
\end{defn}

The image of an embedding $\rho \in \link(H)$ is a submanifold of~$\R^3$ isotopic to $H$. Throughout this paper we will not differentiate between this submanifold and the embedding.
The embedding $H$ in Example \ref{example:Basic Hopf} will serve as the basepoint for $\link(H)$. Note that by definition each component of $H$ is a round circle lying in an affine hyperplane of $\R^3$.

\begin{defn}\label{defn: round unknot}
A \emph{round unknot} is an unparametrised embedding~\[\phi \in \emb (S^1, \R^3)/\Diff (S^1)\] that determines a Euclidean circle in an affine hyperplane~$\R^2\subseteq \R^3$. A \emph{round Hopf link} is~$\rho \in \link(H)$ such that each component of $\rho$ is a round unknot.  Let~$\rlink(H)$ be the subspace of~$\link(H)$ consisting of round Hopf links, and call this the \emph{round embedding space}.
\end{defn}

Observe that every round unknot bounds a canonical flat disc~$\D^2$ in~$\R^3$. The canonical discs spanned by the two components of a round Hopf link $\rho\in \rlink(H)$ intersect in a straight arc, with one endpoint on each unknot component. 
We call this (unoriented) arc the \emph{arc of intersection} of the Hopf link, and denote it~$\arc(\rho)$. It is shown in blue in Figure~\ref{fig:H}.

The embedding $H$ from Example \ref{example:Basic Hopf} is round. Hence $H$ will also serve as a basepoint for $\rlink(H)$. The link $H$ also has the following additional properties which we record here.
\begin{itemize}
\item Each component of $H$ has radius 1.
\item The canonical discs spanned by the components of $H$ are perpendicular.  
\item The endpoints of the arc of intersection $\arc(H)$ are the center points of the canonical disks spanned by the two components.
\end{itemize}

\subsection{Rounding unknots}The  following lemma is equivalent to the Smale conjecture (see (15) in the Appendix to \cite{Hatcher}) and  will be used to make the components of Hopf links round.
\begin{lem}\label{lem: rounding one unknot}
    The space $\rlink(U)$ of round embeddings of the unknot in $\R^3$ is homotopy equivalent to the space $\link(U)$ of smooth embeddings.
\end{lem}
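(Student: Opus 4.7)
My plan is to deduce the lemma from the Smale conjecture, following the recipe indicated by the reference to item (15) of the Appendix in \cite{Hatcher}. The Smale conjecture states that $\Diff(S^3)\simeq \Or(4)$, and restricting along evaluation at a point gives the standard consequence $\Diff(\R^3)\simeq \Or(3)$.

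First I would realise both $\link(U)$ and $\rlink(U)$ as orbit spaces of transitive group actions. By the isotopy extension theorem, any two smooth unknots in $\R^3$ are ambient isotopic, so evaluation at the standard round unit circle $C_0\subset\R^3$ (in the $xy$-plane) gives a principal bundle
\[
\Diff(\R^3,C_0)\longrightarrow \Diff(\R^3)\longrightarrow \link(U),
\]
where $\Diff(\R^3,C_0)$ denotes diffeomorphisms preserving $C_0$ setwise. Analogously, $\rlink(U)$ is parametrised by the data of an affine 2-plane, a centre and a radius, so it is the orbit of $C_0$ under the group of affine similarities of $\R^3$; this group deformation retracts onto $\Or(3)$, with stabiliser $N_{\Or(3)}(C_0)\cong \Or(2)\times\Or(1)$. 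Thus $\rlink(U)$ is homotopy equivalent to $\Or(3)/N_{\Or(3)}(C_0)$.

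Then I would compare the two orbit presentations via the ladder of fiber sequences
\[
\begin{tikzcd}
N_{\Or(3)}(C_0) \ar[r]\ar[d, hook] & \Or(3) \ar[r]\ar[d, hook] & \rlink(U) \ar[d, hook]\\
\Diff(\R^3,C_0) \ar[r] & \Diff(\R^3) \ar[r] & \link(U).
\end{tikzcd}
\]
The Smale conjecture provides the middle vertical homotopy equivalence, and its reformulation in item (15) of Hatcher's Appendix (concerning smooth unknots and their stabilisers) gives the left vertical homotopy equivalence. Applying the five lemma to the long exact sequences in homotopy forces the right vertical arrow to be a homotopy equivalence as well.

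The hard part is precisely the left vertical equivalence $N_{\Or(3)}(C_0)\hookrightarrow \Diff(\R^3,C_0)$, namely that the smooth group of diffeomorphisms preserving a round unknot setwise is homotopy equivalent to the finite-dimensional compact Lie group of rigid motions doing so. This step is where the full strength of the Smale conjecture is used; rather than reprove it, I would quote item (15) of the Appendix in \cite{Hatcher} directly.
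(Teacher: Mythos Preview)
The paper gives no proof here: it simply identifies the lemma with item (15) in the Appendix of Hatcher's Smale-conjecture paper and cites that directly.

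Your ladder of fibre sequences is set up correctly, and given the middle equivalence $\Or(3)\hookrightarrow\Diff(\R^3)$ the five lemma does make the lemma (the right arrow) equivalent to the stabiliser statement (the left arrow). The problem is the last step. Item (15) in Hatcher's Appendix \emph{is} the statement $\rlink(U)\hookrightarrow\link(U)$ about the embedding spaces --- it is the lemma itself, not a statement about $N_{\Or(3)}(C_0)\hookrightarrow\Diff(\R^3,C_0)$. So when you quote (15) for the left vertical arrow you are invoking the very lemma you set out to prove in order to establish an equivalent reformulation of it, and then reading the lemma back off. The fibre-sequence machinery therefore buys nothing beyond what the paper already does by citing (15) outright. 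If you want a genuine reduction you must analyse $\Diff(\R^3,C_0)$ by other means --- say via a tubular neighbourhood of $C_0$ and one of the \emph{other} items on Hatcher's list concerning $\Diff_\partial(S^1\times D^2)$ or the unknot complement --- but that amounts to rederiving part of Hatcher's chain of equivalences, which is precisely why the paper just cites the finished product.
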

\subsection{Straightening unknotted arcs}
Let $B^3$ be the closed 3-ball.  An equator divides $\partial B^3$ into two open hemispheres $S^2_+$ and $S^2_-$.  Consider the space \[\emb_U((I,\{1\},\{0\}),(B^3,S^2_+,S^2_-))\]
of proper embeddings of $I=[0,1]$ into $B^3$ which send $\{1\}$ to $S^2_+$ and $\{0\}$ into $S^2_-$, where the subscript $U$ indicates that the embeddings are isotopic to an unknotted arc.
\begin{lem}\label{lem: NP SP arc space contractible}
    The space of embeddings $\emb_U((I,\{1\},\{0\}),(B^3,S^2_+,S^2_-))$ is contractible.
\end{lem}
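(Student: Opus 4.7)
The plan is to realise the embedding space, call it $E$, as the total space of a Serre fibration with contractible base and contractible fiber. Consider the endpoint evaluation
\[
\mathrm{ev}\colon E \to S^2_- \times S^2_+, \qquad \gamma \mapsto (\gamma(0), \gamma(1)).
\]
By parametrised isotopy extension (the Palais restriction fibration for $\partial I \subset I$, cut down to our subspaces), $\mathrm{ev}$ is a Serre fibration. Its base is a product of two open $2$-disks and is therefore contractible, so the contractibility of $E$ reduces to the contractibility of each fiber.

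A fiber $E^{q,p} := \mathrm{ev}^{-1}(q,p)$ is the space of unknotted proper embeddings of $I$ into $B^3$ with $\gamma(0)=q$ and $\gamma(1)=p$. To see this is contractible, fix a standard straight-line arc $\alpha_0$ from $q$ to $p$. Since any two unknotted arcs with these endpoints are ambient isotopic rel $\partial B^3$ pointwise, the orbit map gives (via parametrised isotopy extension) a Serre fibration
\[
\Diff(B^3, \partial B^3 \cup \alpha_0) \longrightarrow \Diff(B^3, \partial B^3) \xrightarrow{\phi \mapsto \phi \circ \alpha_0} E^{q,p},
\]
in which the middle term is contractible by Hatcher's proof of the Smale conjecture (which is equivalent to Lemma~\ref{lem: rounding one unknot}). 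One then verifies that the left-hand group is also contractible: removing an open tubular neighborhood of $\alpha_0$ yields a simply connected compact $3$-manifold with $S^2$ boundary, hence a $3$-ball by the Poincar\'e conjecture, and a further application of the Smale conjecture, together with the contractibility of the space of normal framings of $\alpha_0$ compatible with $\partial B^3$, produces contractibility of $\Diff(B^3, \partial B^3 \cup \alpha_0)$. The long exact sequence of this second fibration then forces $E^{q,p} \simeq *$, and combined with the first fibration gives $E \simeq *$.

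The main obstacle is the last step, namely showing $\Diff(B^3, \partial B^3 \cup \alpha_0)$ is contractible. One must carefully organise the normal framings along $\alpha_0$ together with the pointwise boundary conditions on $\partial B^3$, to ensure that no spurious rotational factor obstructs contractibility; once this is done, the argument reduces to two applications of the Smale conjecture glued together via a further Serre fibration. All remaining steps are routine manipulation of long exact sequences of Serre fibrations.
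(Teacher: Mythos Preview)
Your approach is correct and its first half --- the endpoint evaluation fibration over the contractible base $S^2_+\times S^2_-$ --- is exactly what the paper does. For the fiber, the paper simply invokes item~(6) in Hatcher's appendix of Smale-conjecture equivalences, which asserts directly that the space of unknotted arcs in $B^3$ with fixed endpoints is contractible; your second fibration through $\Diff_\partial(B^3)$ amounts to re-deriving that equivalent form from the standard one, so the extra work is not needed here. Two small remarks on your sketch: the full space of normal framings of $\alpha_0$ rel the two boundary disks is homotopy equivalent to $\Omega\SO(2)\simeq\Z$, not contractible --- what you actually need, and what is true, is that only the trivially-framed component is hit by the restriction map from $\Diff_{\partial B^3\cup\alpha_0}(B^3)$, and that component is contractible; and the Poincar\'e conjecture is unnecessary, since unknottedness of $\alpha_0$ already exhibits $B^3\setminus N(\alpha_0)$ as a $3$-ball directly.
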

\begin{proof}
    The forgetful map $\emb_U((I,\{1\},\{0\}),(B^3,S^2_+,S^2_-))\rightarrow S^2_+\times S^2_-$ which records the image of the endpoints is a fibration. By a statement equivalent to the Smale conjecture the fiber is contractible (see (6) in the Appendix to \cite{Hatcher}). Since $S^2_+\times S^2_-$  is contractible, we conclude $\emb_U((I,\{1\},\{0\}),(B^3,S^2_+,S^2_-))$ is also contractible.
\end{proof}

\subsection{Contractibility of collars}

Let $M$ be a smooth compact manifold with boundary and let $F\subset \partial M$ be a compact codimension 0 submanifold (possibly with boundary). 
\begin{defn}
    A \emph{collar of $F$} is an embedding of $\chi\colon F\times I\rightarrow M$ such that $\chi(F\times \{0\})=F\subset \partial M$ and $\chi(F\times(0,1])\subset\Int(M)$. Denote by $\coll(F)$ the space of collars of $F$.
\end{defn}
When $\partial F\neq\emptyset$, we regard $F\times I$ as a manifold with corners.  On the other hand, if $\partial F=\emptyset$ (and hence $F$ is a component of $\partial M$), then the above notion of collar agrees with the usual one. The following result is due to Cerf \cite[5.2.1, Corollaire 1]{Cerf}.

\begin{lem}\label{lem:Boundary-Collar-Contractible}Let $F\subset \partial M$ be a codimension 0 compact submanifold. The space of collars $\coll(F)$ is contractible.
\end{lem}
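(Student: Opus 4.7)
The plan is to follow Cerf's classical approach of reducing the contractibility of $\coll(F)$ to that of a convex space of vector fields. The starting observation is that a collar $\chi \colon F \times I \to M$ is, up to a smooth rescaling of the $I$-coordinate, determined by its germ along $F$, and to first order this germ is captured by the velocity field $v_\chi(x) = \frac{\partial \chi}{\partial t}(x,0)$, a section of $TM|_F$ that is inward-pointing into $M$ at interior points of $F$ and tangent to $\partial M$ along $\partial F$ when $\partial F \neq \emptyset$.

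First I would set up the evaluation map $e\colon \coll(F) \to V(F)$ defined by $e(\chi) = v_\chi$, where $V(F)$ is the space of admissible vector fields above. Since convex combinations of inward-pointing vectors remain inward-pointing and the tangency condition along $\partial F$ is linear, $V(F)$ is nonempty and convex, hence contractible. Next I would show that $e$ is a Serre fibration: given a path $v_s$ in $V(F)$ starting at $v_0 = v_{\chi_0}$, one constructs $\chi_s$ by first modifying the $1$-jet of $\chi_0$ along $F$ to match $v_s$ (using a straight-line homotopy in the jet bundle) and then extending into $F \times I$ by flowing along an interpolating time-dependent vector field, with a smooth cutoff in the $I$-direction and a small rescaling $t \mapsto \lambda t$ to guarantee the result is a proper embedding onto a full collar. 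Finally, the fiber $e^{-1}(v)$ of collars with prescribed first-order behavior along $F$ should be shown to be contractible: any two such collars agree to first order at $t=0$, so after restricting to $F \times [0, \epsilon]$ for small $\epsilon$ and rescaling back to $F \times I$, the affine interpolation of their Taylor expansions in $t$ remains a smooth embedding, yielding a canonical straight-line homotopy in the fiber. Contractibility of $\coll(F)$ then follows from contractibility of base and fiber.

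The step I expect to be the main obstacle is the corner case $\partial F \neq \emptyset$, in which $F \times I$ must be treated as a manifold with corners and the boundary strata $\partial F \times \{0\}$ and $\partial F \times I$ must map into prescribed subsets of $\partial M$. Both the fibration argument and the fiber contraction must preserve this corner stratification, which in turn requires all vector fields, flows, and cutoffs to be chosen tangent to $\partial M$ along $\partial F$ and smoothly compatible with corners. This is precisely the delicate point where Cerf's careful treatment in \cite{Cerf} is needed to promote the above sketch to a rigorous argument, which is why we appeal to the citation directly.
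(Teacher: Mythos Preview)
The paper does not give its own proof of this lemma: it simply cites Cerf \cite[5.2.1, Corollaire~1]{Cerf} and states the result. Your proposal likewise ends by deferring to Cerf, so at the level of what is actually being \emph{proved} the two agree.

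That said, your sketch of Cerf's argument has a genuine soft spot worth flagging. The fiber contraction step---``affine interpolation of their Taylor expansions in $t$ remains a smooth embedding''---does not work as stated: convex combinations of embeddings are not embeddings in general, and matching $1$-jets along $F$ does not make the straight-line homotopy an embedding for all $t\in I$, only for $t$ near $0$. Cerf's actual mechanism is different: one first shows that restriction-and-rescale $\chi\mapsto \chi(\,\cdot\,,\lambda\,\cdot\,)$ exhibits $\coll(F)$ as homotopy equivalent to the space of \emph{germs} of collars along $F$, and it is at the germ level that the reduction to the convex space of inward vector fields becomes clean. Your fibration outline is morally right, but the contraction of the fiber should go through this germ/rescaling step rather than through an affine interpolation of embeddings. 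Since you are citing Cerf anyway this is not fatal, but if you retain the sketch you should adjust that sentence.
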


\section{Relating the round and smooth embedding spaces} \label{sec: round equiv smooth}
In this section we prove our intermediate theorem: 

\begin{restate}{Theorem}{abcthm: round space equivalence}
The inclusion map $\rlink(H)\hookrightarrow \link(H)$ is a homotopy equivalence.
\end{restate}

Our proof takes inspiration from Hatcher's work \cite{Hatcher76, Hatcher99}. Since $\link(H)$ is a Fr\'echet manifold (see \cite{BinzFischer78,Michor80}) and thus has the homotopy type of a CW complex \cite{Milnor,Palais}, it is enough to prove that the inclusion $\rlink(H)\hookrightarrow\link(H)$ induces isomorphisms on all homotopy groups. Equivalently, we must show that the relative homotopy groups $\pi_k(\link(H),\rlink(H))$ vanish for all $k\geq 0$.

\subsection{Notation and first reductions} Since the proof will occupy the entire section we first set up some notation used throughout the proof, and describe a preliminary step. To this end, suppose $f\colon (D^k,\partial D^k) \to (\link(H),\rlink(H))$ is a $D^k$-parametrised family of Hopf links representing a relative homotopy class $[f]\in \pi_k(\link(H),\rlink(H))$. 
\begin{itemize}
    \item The standard round Hopf link $H$ of Example \ref{example:Basic Hopf} is the basepoint for both $\link(H)$ and $\rlink(H)$, so for the basepoint $t_0\in \partial D^k=S^{k-1}$, $f(t_0)=H$.
    \item We make a choice of labelling of the link components at the basepoint and denote them $L_1=L_1(t_0)$ and $L_2=L_2(t_0)$. Since $D^k$ is contractible this yields a continuous choice of labelling $L_1(t)$ and $L_2(t)$ of the link $f(t)=(L_1(t),L_2(t))$ for each $t\in D^k$.
\end{itemize}  

\begin{lem}\label{lem:One-Component-Round}
    The map $f\colon (D^k,\partial D^k) \to (\link(H),\rlink(H))$ is homotopic rel $\partial D^k$ to a map such that $L_1(t)$ is round for all $t\in D^k$.
\end{lem}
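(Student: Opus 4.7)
The plan is to first round the first component $L_1(t)$ using Lemma~\ref{lem: rounding one unknot}, and then transport the second component along using the parametrised isotopy extension theorem.

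First, consider the map
\[
g\colon D^k \to \link(U), \qquad g(t) = L_1(t),
\]
which records the first component. Since $f(\partial D^k) \subset \rlink(H)$, we have $g(\partial D^k) \subset \rlink(U)$, so $g$ represents a class in the relative homotopy group $\pi_k(\link(U), \rlink(U))$. By Lemma~\ref{lem: rounding one unknot}, the inclusion $\rlink(U) \hookrightarrow \link(U)$ is a homotopy equivalence, so this relative homotopy group vanishes. Hence there is a homotopy $G\colon D^k \times I \to \link(U)$ with $G(\cdot, 0) = g$, $G(D^k \times \{1\}) \subset \rlink(U)$, and $G(t, s) = g(t)$ constant for $t \in \partial D^k$.

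Next, I would promote the homotopy $G$ to a family of ambient isotopies of $\R^3$ via the parametrised isotopy extension theorem. Since each $L_1(t)$ is contained in a fixed bounded region of $\R^3$ (and we may assume the same for the rounding homotopy), we obtain a continuous family of compactly supported diffeomorphisms $\Phi_{t,s}\colon \R^3 \to \R^3$ with $\Phi_{t,0} = \id$, $\Phi_{t,s}(L_1(t)) = G(t,s)$, and $\Phi_{t,s} = \id$ for $t \in \partial D^k$. Define
\[
F\colon D^k \times I \to \link(H), \qquad F(t,s) = \bigl( G(t,s),\, \Phi_{t,s}(L_2(t)) \bigr).
\]
Because $\Phi_{t,s}$ is a diffeomorphism of $\R^3$, the pair $F(t,s)$ is a disjoint union of two unknots isotopic to the Hopf link, so $F$ indeed lands in $\link(H)$. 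By construction, $F(\cdot, 0) = f$, $F(t,s) = f(t)$ for $t \in \partial D^k$, and $L_1(t)$ is round throughout $F(\cdot, 1)$, yielding the desired homotopy rel $\partial D^k$.

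The main technical point will be verifying that the parametrised isotopy extension theorem applies in the form required: namely, producing a continuous family $\Phi_{t,s}$ that is the identity precisely where $G$ is the constant homotopy (so that the homotopy of $f$ is rel $\partial D^k$). This is standard for compactly supported ambient isotopies of embeddings into a noncompact manifold, and can be arranged by choosing the extension with a parametrised bump function supported near the swept-out region of the rounding isotopy.
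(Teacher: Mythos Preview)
Your proof is correct and follows essentially the same approach as the paper: restrict to the first component to get a relative class in $\pi_k(\link(U),\rlink(U))$, kill it using Lemma~\ref{lem: rounding one unknot}, and carry $L_2(t)$ along by isotopy extension, noting that the extension can be taken to be the identity for $t\in\partial D^k$. The paper's argument is just a terser version of what you wrote.
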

\begin{proof}
     The restriction $L_1(t)$ of $f$ to the first component is a $D^k$-family of unknots  representing a relative homotopy class in $\pi_k(\link(U),\rlink(U))$. By Lemma~\ref{lem: rounding one unknot}, this relative homotopy group is trivial, so we can homotope $L_1(t)$ rel $\partial D^k$ into $\rlink(U)$. Extend this homotopy by isotopy extension to the second component $L_2(t)$. Note that $L_2(t)$ remains round and unchanged for $t\in S^{k-1}$, since the homotopy restricts to the identity on $\partial D^k$, and thus we can assume that the isotopy extension acts as the identity.
\end{proof}

\subsection{An isotopy procedure for a single Hopf link}\label{sec:Isotopy-Removing-Intersections}

Before jumping into the proof of Theorem \ref{abcthm: round space equivalence}, we first describe the isotopy we will use to make a single embedded Hopf link round. This description can be viewed as a $\pi_0$-version of the isotopy in the proof, which we will extend to families of embeddings parametrised by a disc $D^k$. 

Suppose $L$ is a smoothly embedded Hopf link with components $L=(L_1, L_2)$, where $L_1$ is round and has radius $r_1$. Let $D_1$ be the canonical flat disc bounded by $L_1$ and let $D_2$ be any smooth disc bounded by $L_2$. In general, $D_1$ and $L_2$ may intersect in multiple points and may not be transverse. We will now describe a procedure for isotoping $L_2$ to a new embedding $\overline{L}_2$ without changing $L_1$, so that $D_1$ and $\overline{L}_2$ intersect transversely in a single point. 

Without loss of generality, assume that $L_1$ is the circle $(\frac{x_1}{r_1})^2+(\frac{y}{r_1})^2=1$ in the $xy$-plane.  Consider the family of ellipsoids $E_h$ of the form $(\frac{x_1}{r_1})^2+(\frac{y}{r_1})^2+(\frac{z}{h})^2=1$ for $h\in(0,\infty)$. Thus,  $E_h$ is an ellipsoid with equator  $L_1$ and that passes through the points $(0,0,\pm h)$ on the $z$-axis. We decompose $E_h=H^+_h\cup_{L_1}H^-_h$, where $H^\pm_h$ denotes the top and bottom hemispheres (technically hemellipsoids), respectively. Lastly, let $B_h$ be the solid convex `ball' whose boundary is $E_h$. The north pole (NP) is $(0,0,h)\in H_h^+$, while the south pole (SP) is $(0,0,-h)\in H^-_h$. 

 By residuality of transverse embeddings~\cite[Chapter 3, Theorem 2.1]{Hirsch}, $D_2$ will be transverse to $E_h$ for a dense open subset of values $h\in (0,\infty)$, which we indicate by writing $E_h\pitchfork D_2$. In this case, $E_h\cap D_2$ will be a disjoint union of finitely many properly embedded arcs and circles in $D_2$. The example in Figure \ref{fig:IntersectionPattern} shows the arc of intersection.
 
\begin{defn} If $E_h\pitchfork D_2$, the submanifold $K\subset E_h\cap D_2\subset D_2 $ consisting only of intersection arcs is called an \emph{intersection pattern}.
\end{defn} 

The following technical lemma will be used in the proof of Theorem~\ref{abcthm: round space equivalence}.

\begin{lem}\label{lem:Isotopy-Construction} Let $L=(L_1,L_2)$ be a smooth Hopf link such that $L_1$ is round with radius $r_1>0$. Let $D_1$ be the canonical flat disc bounded by $L_1$ and let $D_2$ be a smooth disc bounded by $L_2$ such that $D_2$ meets $L_1$ transversely in a single point. Suppose that $E_{h}\pitchfork D_2$ for some $h>0$, with intersection pattern $K\subset D_2$. Then there exists an isotopy $\Psi(s)\colon [0,1]\to \link(H)$, where $\Psi(s)=(\Psi_1(s),\Psi_2(s))$, satisfying:
\begin{enumerate}
    \item $\Psi(0)=(L_1,L_2)$ and $\Psi_1(s)=L_1$ for all $s\in [0,1]$.
    \item $\Psi_2(1)=\overline{L}_2$ meets $D_1$ transversely and only at its centrepoint.
\end{enumerate}
     The isotopy $\Psi(s)$ is a composition of two isotopies $\Phi(s)$ and $\Gamma(s)$. The first isotopy $\Phi(s)=(\Phi_1(s),\Phi_2(s))$ leaves $L_1$ unchanged, and successively removes intersections $\kappa\in K$ so that $\Phi_2(1)\cap B_{h}$ is a single unknotted segment running between $H_{h}^+$ and $H_{h}^-$. For each $\kappa\in K$, we assign a time $s(\kappa)\in[0,1]$ so that if $\Psi$ removes $\kappa_1$ before $\kappa_2$ then $s(\kappa_1)<s(\kappa_2)$. 
     Let $B_{h}$ be the convex ball bounded by $E_{h}$. The second isotopy  $\Gamma(s)=(\Gamma_1(s),\Gamma_2(s))$ also leaves $L_1$ unchanged and straightens the arc $\Phi_2(1)\cap B_{h}$ so that $\Gamma_2(1)\cap B_{h}$ agrees with the $z$-axis.    
\end{lem}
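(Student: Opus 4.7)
The plan is to construct $\Psi$ as a concatenation $\Gamma \circ \Phi$ of two isotopies. The isotopy $\Phi$ will systematically remove all intersection arcs of $K$ except one distinguished arc $\kappa_0$, by means of local finger-move isotopies. The isotopy $\Gamma$ will then straighten the resulting single arc of $\Phi_2(1) \cap B_h$ to the $z$-axis using Lemma~\ref{lem: NP SP arc space contractible}.

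The first step is to identify the arc $\kappa_0 \in K$ that cannot be removed. Since $p = D_2 \cap L_1$ lies on $L_1 \subset E_h$ and in $D_2$, it belongs to some connected component of $D_2 \cap E_h$. I claim this component is an arc, not a circle. For if it were a simple closed curve $c$ on $E_h$, then transversality of $D_2$ with both $E_h$ and $L_1$ forces $c$ and $L_1$ to be transverse on the sphere $E_h$ at $p$; but two transverse simple closed curves on $S^2$ meet in an even number of points, while $c \cap L_1 \subseteq D_2 \cap L_1 = \{p\}$. Hence $p$ lies on an arc $\kappa_0 \in K$.

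For the isotopy $\Phi$, the arc $\kappa_0$ splits $D_2$ into two subdiscs, and the remaining arcs of $K$ are distributed between them. The standard dual-tree argument produces an outermost arc $\kappa \in K \sm \{\kappa_0\}$, namely one bounding a subdisc $\Delta \subset D_2$ with $\partial \Delta = \kappa \cup \alpha$, $\alpha \subset L_2$, such that $\Delta$ contains no other arcs of $K$ and is disjoint from $\kappa_0$ (hence from $p$ and $L_1$). I then choose a tubular neighbourhood $N(\Delta)$ small enough to be disjoint from $L_1$ and from $L_2 \sm \alpha$, and perform a finger-move isotopy supported in $N(\Delta)$ pushing $\alpha$ across $\kappa$ to the opposite side of $E_h$. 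This fixes $L_1$, keeps $L_2$ embedded, and yields an updated disc with intersection pattern $K \sm \{\kappa\}$. Iterating and assigning the successive removals strictly increasing times in $[0,1]$ until only $\kappa_0$ remains produces $\Phi$. The resulting arc $\Phi_2(1) \cap B_h$ is unknotted, witnessed by the subdisc of the updated $D_2 \cap B_h$ containing $\kappa_0$.

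For $\Gamma$, the arc $\Phi_2(1) \cap B_h$ is a proper unknotted arc in $B_h$ with one endpoint in each of $H_h^+$ and $H_h^-$, because $\kappa_0$ crosses the equator $L_1$ only at $p$ and so its endpoints on $L_2$ lie in opposite hemispheres. By Lemma~\ref{lem: NP SP arc space contractible}, applied via a diffeomorphism $B_h \cong B^3$, the space of such arcs is contractible, so there is a path from $\Phi_2(1) \cap B_h$ to the $z$-axis segment. I would promote this path to an ambient isotopy of $\R^3$ by arranging the endpoint trajectories to remain in $H_h^\pm$ (away from $L_1$) and applying isotopy extension with support in a neighbourhood of the arc in $B_h$ together with a neighbourhood of the endpoint paths in $E_h \sm L_1$. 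The main obstacle is ensuring each finger move of $\Phi$ leaves $L_1$ fixed and introduces no uncontrolled new intersections with $E_h$; this reduces to the choice of an outermost subdisc $\Delta$ disjoint from $L_1$ with $\Delta \cap L_2 = \alpha$, so that a sufficiently thin tubular neighbourhood of $\Delta$ avoids both $L_1$ and $L_2 \sm \alpha$, confining the move to a controlled local region.
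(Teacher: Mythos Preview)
Your approach is essentially the same as the paper's: identify the distinguished arc (you via the point $p=D_2\cap L_1$, the paper via the unique arc joining a $+$ endpoint to a $-$ endpoint), remove the remaining arcs one at a time by pushing across innermost/outermost subdiscs, then straighten via Lemma~\ref{lem: NP SP arc space contractible}. The paper's removal step is more explicit---it uses $\varepsilon$-pushoffs $\kappa(\eta)$ of each innermost arc and a chosen collar of an enlarged boundary interval $I'\subset L_2$ to guide the push---because that collar is precisely the datum that must be varied continuously over $D^k$ in the parametrised proof of Theorem~\ref{abcthm: round space equivalence}.
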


         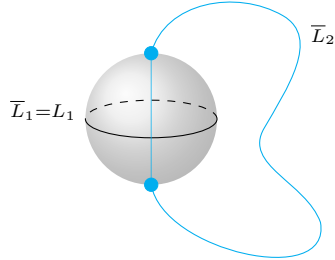
\begin{figure}[h]
    \begin{tikzpicture}[scale=2.5]
     \shade[ball color = gray!40, opacity = 0.4] (-.2,-.35) circle (0.35cm);
     \draw (-.55,-.35) arc (180:360:.35 and 0.1);
     \draw[dashed] (.15,-.35) arc (0:180:.35 and 0.1);
     \draw[cyan] (-.2,0) to[out=75,in=150](.5,.2) to[out=330,in=60](.4,-.4) to[out=240,in=110](.7,-.9) to[out=280,in=280](-.2,-.7);
     \filldraw [cyan] (-.2,0) circle (1pt);
     \filldraw [cyan] (-.2,-.7) circle (1pt);
     \draw[thin, cyan, opacity=.7] (-.2,0) to[](-.2,-.7);
     \node[below left] at (-.55,-.2) {$\scriptstyle \overline{L}_1=L_1$};
     \node[right] at (.6,.1){$\scriptstyle \overline{L}_2$};
    \end{tikzpicture}
    
    \caption{The Hopf link $(\overline{L}_1,\overline{L}_2)$ resulting from the isotopy $\Psi(s)$.}
    \label{fig:end of isotopy}
\end{figure}
\begin{proof}
We first describe an isotopy $\Phi(s)=(\Phi_1(s),\Phi_2(s))$ such that $\Phi_2(1)\cap B_{h}$ is a single unknotted segment running between $H_{h}^+$ and $H_{h}^-$. 

Label the points of intersection of $L_2$ with $E_h$ as $A_1,\ldots,A_n$ in cyclic order.  Each intersection point is decorated with $+$ or $-$ according to whether it represents an intersection with $H_{h}^+$ or $H_{h}^-$. The intersections of $D_2$ with $E_h$ are comprised of a collection of arcs running between pairs of the $A_j$ or circles, both lying in the interior of $D_2$. Since the linking number of $L_1$ and $L_2$ is 1, all arcs run between a pair of $A_j$ with the same sign, except for one. Let this special arc be called  $\alpha$. If $\alpha$ is the only arc, then we are done. Otherwise, we will remove all arc intersections inductively, until $\alpha$ is the only one remaining.

\begin{figure}[h]
    \centering
    \begin{tikzpicture}[scale=2.3]
     \draw (-.75,0) arc (180:360:.75 and 0.15);
     \filldraw[white] (0.025,-0.15) circle (0.3mm);
     \shade[ball color = red!40, opacity = 0.4] (0,0) ellipse (.75cm and .25cm);
     \draw[dashed] (.75,0) arc (0:180:.75 and 0.1);
        \draw[red] (-.58,.165) arc(140:269:.75 and .25);
        \draw[red] (.02,.-.25) arc(271:403:.75 and .25);
        \draw[red] (.53,.177) arc(45:83.5:.75 and .25);
        \draw[red] (.05,.25) arc(86:117:.75 and .25);
        \draw[red] (-.375,.22) arc(119:135:.75 and .25);
        \draw (.5,0) to[out=90,in=170](1,.4) to[out=350,in=0](.8,-.2) to[out=180,in=290](.65,-.15);
        \draw (.47,-.25) to[out=260,in=0](.1,-.9)to[out=180,in=355](-.1,.4)to[out=175,in=80](-.45,-.1);
        \draw (.45,.22) to[out=110,in=30](-.3,.5) to[out=210,in=80](-.6,0);
        \draw (-.46,-.21) to[out=255,in=0](-.57,-.5) to[out=180,in=260](-.64,-.15);
        \node at (-.85,0){$\scriptstyle L_1$};
        \node[below left, red] at (-.1,-.2){$\scriptstyle E_{h}$};
        \node[above right] at (1,.3){$\scriptstyle L_2$};
      
        \filldraw[gray!30!white!70!,draw=black] (2.5,0) circle (.75cm);

        \draw[red] (3.06,.5) to[out=190,in=170](3.22,-.2);
        \draw[red] (1.805,-.28) to[out=0,in=0](1.87,.4);
        \draw[red] (1.78,.2) to[out=0,in=10](1.76,-.1);
      
        \node at (3.13,.5){$\scriptstyle +$};
        \node at (3.28,-.2){$\scriptstyle -$};
        \node at (3.05,-.15) {$\scriptstyle \alpha$};
        
        \node at  (1.75,-.3){$\scriptstyle +$};
        \node at  (1.81,.4){$\scriptstyle +$};
        \node at  (2,-.25){$\scriptstyle \kappa_2$};
        
        \node at  (1.725,.2){$\scriptstyle -$};
        \node at  (1.705,-.1){$\scriptstyle -$};
        \node at  (1.70,.05){$\scriptstyle I$};
        \node at  (1.87,-.11){$\scriptstyle \kappa_1$};
        
        \node at (2.5,-.9){$\scriptstyle D_2$};

    \end{tikzpicture}
    \caption{On the left, the ellipsoid $E_{h}$ (in red) with equator $L_1$,  and $L_2$. On the right, a flat version of the disc $D_2$ bounded by $L_2$ is pictured with intersection pattern $K\subset D_2$ in red. We assume that $D_2$ meets $L_1$ transversely in a single point, so that any other intersections between $D_2$ and $E_h$ are circles (not pictured) lying in $E_h\setminus L_1$.}
    \label{fig:IntersectionPattern}
\end{figure}
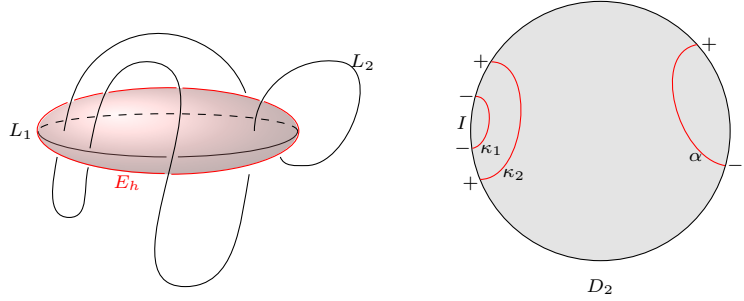

Each $\kappa\in K$ cobounds a disc $C(\kappa)$ on $D_2$ with an arc of $L_2$ between two of the $\{A_j\}$ on one side of $\alpha$.  If  $\lambda\subset E_h\cap D_2$ is a circle, then $\lambda$ bounds a disc in the interior of $D_2$. In this case $\lambda$ also bounds a disc on $E_{h}$. Since $D_2\cap L_2$ is a single point, it follows that this disc lies in $E_{h}\setminus L_1$. We call an arc $\kappa$ \emph{innermost} if there does not exist another arc $\kappa'$ such that $C(\kappa')\subset C(\kappa)$.  

For the inductive step, let $\kappa$ be an innermost arc with endpoints $A_{j}$ and $A_{j+1}$, and let $C=C(\kappa)$ be the disc $\kappa$ cobounds with the subarc $I=[A_j,A_{j+1}]\subset L_2$  between its endpoints. Since transversality is an open condition, there exists an $\varepsilon>0$ such that for any $\eta\in [-2\varepsilon,2\varepsilon]$,  $E_{h+\eta}$ intersects $D_2$ transversely in an intersection pattern $K(\eta)$ isotopic to $K$. In particular, there is a bijection between arcs in $K(\eta)$ and $K$. We choose $\varepsilon$ so that for all $\eta\in [-2\varepsilon,2\varepsilon]$, $\kappa(\eta)$ is disjoint from all other components in $(E_h\cap D_2)\setminus \kappa$. Moreover, for either all $\eta\in[0,2\varepsilon]$ or all $\eta\in [-2\varepsilon,0]$, the intersection $\kappa(\eta)$ isotopic to $\kappa$ bounds a disk $C(\eta)$ strictly containing $C$. Without loss of generality, assume this is true for all $\eta<0$, as the other case is symmetric. Let $\kappa'=\kappa(\varepsilon)$, $C'=C(\varepsilon)$ and $I'=C'\cap L_2$, then choose a collar\footnote{The isotopy defined here depends both on the choice of $\varepsilon$ and the choice of collar. In the parametrised version, we will choose a uniform $\varepsilon>0$ for all of $D^k$, and  a vary the choice of collar continuously with the parameter $t\in D^k$.} $\chi\colon I'\times [0,1]\rightarrow D_2$ such that
            \begin{enumerate}
                \item $C\subsetneq\chi\left(I'\times [0,\frac{1}{2}]\right)$,
                \item $\im(\chi)\subset C(2\varepsilon)$. 
            \end{enumerate}
From our choice of interval $\varepsilon$, (1) and (2) imply that $\im(\chi)$ is disjoint from all other intersections of $D_2$ with $E_{h}$ except possibly circles contained in $C$. 

Our method for removing $\kappa$ will be to push $L_2$ along $\chi$ past $\kappa$ itself. In the process, we may pass through circle intersections, but we can do this since these are disjoint from $L_1$, and since they are strictly contained in $C$, they will be completely removed by the end. Define an isotopy $\varphi_s\colon C(2\varepsilon)\rightarrow C(2\varepsilon)$  by pushing $L_2 $ along the transverse direction $\chi(\{x\}\times [0,1])$ so that $\varphi_1(C)\subset \chi(I\times[\frac{1}{2},1])$. This must be damped out near $\partial I'\times [0,1]$ to preserve smoothness of the image of $L_2$. By (1), $\varphi_1(D_2)\subset D_2$ no longer contains the intersection arc $\kappa$.

    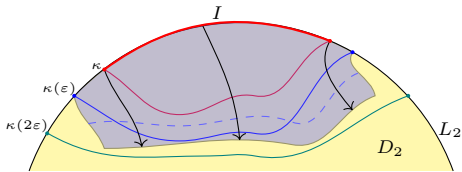
\begin{figure}[h]
        \centering
        \begin{tikzpicture}
            \filldraw[fill=yellow!40] (0,0) arc(160:20:3);
            \draw[red](1,1.35) to[out=-30,in=180] (2,.75) to[out=0,in=170](3,1) to[out=-10,in=200](4,1.73);

            \draw[blue](.6,1) to[out=-30,in=180] (2,.4) to[out=0,in=170](3,.5) to[out=-10,in=200](4.3,1.58);
        `   
            
            \draw[teal](.25,.5) to[out=-30,in=180] (2.4,.2) to[out=0,in=170](3,.2) to[out=-10,in=200](5.04,1);
            
            \filldraw[fill=blue!60,opacity=.4](.6,1) to[out=-80,in=110](1,.3) to[out=0,in=170](3.5,.35) to[out=-10,in=200](4.6,1) to[out=100,in=240](4.3,1.58) to[out=151,in=45.5](.6,1);

            \draw[blue!60,dashed](.785,.63) to[out=10,in=180](2,.6) to[out=0,in=160](3.5,.65) to[out=-20,in=200](4.37,1.3);

            \draw[->] (1,1.35) to[out=-70,in=100](1.5,.32);
            \draw[->] (2.3,1.95) to[out=-65,in=95](2.8,.41);
            \draw[->] (4,1.73) to[out=240,in=130](4.3,.81);

            \draw[thick,red] (4,1.73) to[out=159,in=36.4](1,1.35);
            \node at (2.5,2.1) {$\scriptstyle I$};
            
            \fill[red] (1,1.35) circle (.75pt);
            \fill[red] (4,1.73) circle (.75pt);
            \node at (.9,1.4) {\tiny$\scriptstyle\kappa$};

            \fill[blue] (.6,1) circle (.75pt);
            \fill[blue] (4.3,1.58) circle (.75pt);
            \node at (.4,1.1) {\tiny$\scriptstyle\kappa(\varepsilon)$};

            \fill[teal] (.24,.5) circle (.75pt);
            \fill[teal] (5.04,1) circle (.75pt);
            \node at (-.03,.6) {\tiny$\scriptstyle\kappa(2\varepsilon)$};

            \node at (4.8,.3) {$\scriptstyle D_2$};
            \node at (5.6,.55) {$\scriptstyle L_2$};

        \end{tikzpicture}
        \caption{The collar used to isotope away an innermost arc intersection. The arc $\kappa$ and the bounding interval $I\subset L_2$ is shown in red, while the pushoffs $\kappa(\varepsilon)$ and $\kappa(2\varepsilon)$ are shown in blue and teal, respectively. The interval $I'$ is the segment between the two blue endpoints on the semicircle. A collar $\chi$ of $I'$ is shown in pale blue, and the dashed blue line represents the boundary of $\chi(I'\times[0,\frac{1}{2}])$, which contains $C$. The isotopy pushes $C$ across the collar along the transverse black arrows.}
        \label{fig:enter-label}
    \end{figure}

    Choose a total order on the arc intersections (not including $\alpha$) $A=\{\kappa_1,\ldots, \kappa_m\}$ such that if $C(\kappa_i)\subset C(\kappa_j)$ then $i<j$. To each $\kappa_i$ we assign a time $s(\kappa_i)\in [0,1]$ such that if $i<j$ then  $s(\kappa_i)<s(\kappa_j)$. Pick a $\delta$ that is smaller than the max difference between any two $s$ values, and define $\Phi(u)$ to do the isotopy for intersection $\kappa_i$ in the time interval $[s(\kappa_i), s(\kappa_i)+\delta]$. Since $\Phi(s)$ only affects points in $D_2$ away from $\alpha$, $L_1$ is unchanged by $\Phi(s)$.
    
    Let $\Phi(1)=(\Phi_1(1),\Phi_2(1))$ be the Hopf link resulting from this isotopy. Then $\Phi_1(1)=L_1$ and  $\Phi_2(1)\cap B_{h}=\alpha$ may be regarded as an embedded arc into $B_{h}$. We necessarily have that one end of this arc lies on $H_h^+$ and one end on $H_h^-$. By Lemma \ref{lem: NP SP arc space contractible} the space of such embeddings is contractible, in particular it is connected, so there is an isotopy $\Gamma(s)=(\Gamma_1(s),\Gamma_2(s))$ that straightens the arc so that it agrees with the portion of the $z$-axis between NP and SP. We can choose the isotopy so that it does not change $L_1$, nor the portion of $\Phi_2(1)$ outside a neighbourhood of $B_{h}$. Concatenating $\Phi(s)$ and $\Gamma(s)$ is the desired isotopy $\Psi(s)$, satisfying (1) and (2).
\end{proof}

\subsection{Upgrading to a parametrised isotopy}
We now have all the ingredients to prove Theorem \ref{abcthm: round space equivalence}, via a parametrised version of the isotopy in Section~\ref{sec:Isotopy-Removing-Intersections}.

\begin{proof}[Proof of Theorem \ref{abcthm: round space equivalence}]
    Let $f\colon (D^k,\partial D^k) \to (\link(H),\rlink(H))$ represent an arbitrary element of $\pi_k(\link(H),\rlink(H))$. Our aim is to homotope $f$ rel $\partial D^k$ so that its image lies entirely in $\rlink(H)$. Let $L_1(t)$ and $L_2(t)$ denote the $D^k$-families of embeddings obtained by restricting to $L_1$ and $L_2$, respectively. By Lemma \ref{lem:One-Component-Round}, we may assume that $L_1(t)$ is round for all $t\in D^k$. Recall that round unknots bound canonical flat round discs in $\R^3$. In particular, for every $t\in D^k$, $L_1(t)$ bounds a canonical round disk $D_1(t)$. Since $L_2(t_0)$ is also round, we let $D_2(t_0)$ be the canonical flat disc, and obtain a family of discs $D_2(t)$ by isotopy extension. Therefore, since $D_2(t_0)\cap L_1(t_0)$ is a single point, $D_2(t)\cap L_1(t)$ is a single point for all $t\in \partial D^k$. 
    
    After this initial set-up, the proof of the theorem proceeds via the following three steps. 
    \begin{itemize}
        \item[Step 1] For each $t\in D^k$ we define an isotopy $\Psi[t](s)\colon [0,1]\to \link(H)$ such that $\Psi[t](0)=f(t)$. The notation $[t]$ indicates that the choice of isotopy depends on $t$ but is not (yet) continuous in $t$. We write $\Psi[t](s)=(\Psi_1[t](s),\Psi_2[t](s))$ for the two components of the Hopf link thus obtained, and set $\Psi[t](1):=(\overline{L}_1[t],\overline{L}_2[t])$.
        We construct $\Psi[t](s)$ so that $\Psi_1[t](s)=L_1(t)$ for all $s\in[0,1]$ and so that $\overline{L}_2[t]$ agrees with a segment of the line perpendicular to $D_1(t)$ through its centrepoint in a neighbourhood of $D_1(t).$ 
        \item[Step 2] We show that the isotopies $\Psi[t](s)$ can be defined continuously across $D^k$ to yield an isotopy $
        \Psi(t,s)\colon D^k \times [0,1]\to \link(H)$.
        \item[Step 3] We use the intersection properties of $\overline{L}_2(t)$ with $D_1(t)$ to round $L_2(t)$ continuously across $D^k$, while keeping $L_1(t)$ unchanged, and hence  round.
    \end{itemize}

    Before we start with Step 1, we define a cover of $D^k$ as follows. As in Section \ref{sec:Isotopy-Removing-Intersections}, we consider a family of concentric ellipsoids $E_h(t)$ of varying height $h \in (0,\infty)$ with equator $L_1(t)$ for all $t\in D^k$. For each $t\in D^k$ the disc $D_2(t)$ is transverse to some ellipsoid $E_h(t), h \in (0,\infty)$.  Note that the parameter $h$ defining the ellipsoid is the distance from the poles to the plane containing $D_1$, and hence the family of ellipsoids $E_h(t)$ varies continuously with $L_1(t)$.  Transversality is an open condition, so by compactness of $D^k$ we may cover $D^k$ with a finite collection of path-connected open sets $\{U_i\}_{i\in I}$ such that there exists $h_i \in (0,\infty)$ with $L_2(t)$ transverse to the ellipsoid $E_{h_i}(t)$ whenever $t\in U_i$. 
    
    The specific value of $h_i$ does not matter for this proof, so we denote $E_{h_i}(t)$ by $E_i(t)$. Let $B_i(t)$ be the convex ball with boundary $E_i(t)$ whenever $t\in U_i$. We also obtain a continuous family of upper and lower hemispheres  across $U_i$, denoted $H_i^+(t)$ and $H_i^-(t)$, so $E_i(t)=H_i^+(t)\cup_{L_1(t)}H_i^-(t)$. The north pole (NP) is the centrepoint of $H_i^+(t)$ and the south pole (SP) is the centrepoint of $H_i^-(t)$.

    \textbf{Step 1} 

    For $t\in U_i$, let $K_i(t)\subset E_i(t)\cap D_2(t)$ be the intersection pattern.  We apply Lemma \ref{lem:Isotopy-Construction} to obtain an isotopy $\Psi[t](s)_i$ with the claimed properties. Because there are only finitely many $U_i$, we can choose one $\varepsilon>0$ such that for all $U_i$ and for all $h\in[-2\varepsilon,2\varepsilon]$, we have that $E_{h_i+h}(t)\pitchfork D_2(t)$.    Moreover, since $U_i$ is path-connected and $D_2(t)\pitchfork E_i(t)$ on $U_i$, we obtain an order-preserving bijection between arcs of $K_i(t)$ and $K_i(t')$ for $t,t'\in U_i$, therefore we may choose the function $s_i\colon K_i(t)\to [0,1]$ and time interval $\delta_i$ from Lemma~\ref{lem:Isotopy-Construction} uniformly for $t\in U_i$.

    Since a given $t$ may lie in more than one $U_i$, we define $\Psi[t](s)$ by `gluing' the isotopies $\{\Psi[t](s)_i\}_{\{i | t\in U_i\}}$ together. Pick functions $s_i$ and $\delta_i$ above so that the set of isotopies $\{\Psi[t](s)_i\}_{\{i | t\in U_i\}}$ are supported at different times. Furthermore if $t\in U_i\cap U_j$, then the intersections of $L_2(t)$ with $E_i(t)$ and $E_j(t)$ are comparable/nested in $D_2(t)$, so we can arrange for the $s(\kappa)$ values for the intervals to appear in such an order that the innermost intersections are isotoped away first. It follows from these choices that the set of isotopies $\{\Psi[t](s)_i\}_{\{i | t\in U_i\}}$ can be performed simultaneously.
        
    Let $\{U_i'\}$ be a cover subordinate to $\{U_i\}$. Define a partition of unity $\varphi_i(t)\colon D^k \to [0,1]$ such that $\varphi_i(t)=1$ for $t\in U_i'$, and $\varphi_i(t)=0$ for $t\notin U_i$. Denote by $\Psi[t](s)_i^\varphi$ the isotopy which is equal to $\Psi[t](s)_i$ for $s \in [0,\varphi_i(t)]$, and the identity otherwise. Let $\Psi[t](s)$ be the isotopy which simultaneously does the isotopies 
        $\{\Psi[t](s)_i^{\varphi}\}_{\{i | t\in U_i\}}$. 

        \textbf{Step 2} 
        
        We show that $\Psi[t](s)$ can be defined continuously across $D^k$, to yield an isotopy $\Psi(t,s)\colon D^k \times [0,1]\to \link(H)$.
        Step 1 gave us an isotopy $\Psi[t](s)\colon [0,1] \to \link(H)$ for $t\in D^k$, such that if $t\in U_i'$ then at the end of the isotopy $\overline{L}_2(t)$ transversely intersects $B_{i}(t)$ in a straight arc running from NP to SP. Pick a simplicial subdivision of $D^k$, such that each simplex lies in some $U_i'$. For $t$ in the set of 0-simplices, let $\Psi(t,s)=\Psi[t](s)$. For the inductive step we show that the isotopy can be continuously extended from $j$- to $(j+1)$-simplices for $0\leq j\leq k-1$. We show this for the two isotopies $\Phi(s)$ and $\Gamma(s)$ comprising $\Psi(s)$ in turn. 
        \begin{enumerate}[(a)]
            \item The isotopy $\Phi(s)$ which removes an intersection $\kappa\in K(t)$ is defined using a collar. Since a $(j+1)$-simplex is entirely in one $U_i'$ the intersection pattern $K(t)$ varies only by isotopy across the simplex. The space of collars for intervals in $\partial D_2(t)$ is contractible by Lemma~\ref{lem:Boundary-Collar-Contractible}. Thus, we can always extend the choice of collar over the $(j+1)$-simplex.
            \item At $t\in D^k$ we straighten the unknotted arc $L_2(t)\cap B_i$ to the straight line from NP to SP. Over the simplex, which is contained in $U_i'$, this unknotted arc will have the property that one boundary is in $H^+_i$ and one boundary in $H^-_i$, since the equator $L_1(t)$ is disjoint from $L_2(t)$. By Lemma~\ref{lem: NP SP arc space contractible} the space of such embeddings is contractible, so we can extend the straightening over the $(j+1)$-simplex. (Note that the property of intersecting $B_i$ in a straight line through the poles passes to $B_j$ when $j< i$, and so can be satisfied for multiple $U_i$ at once.)
        \end{enumerate}

        Note that during this isotopy we must ensure that the image of $\Psi(t,s)$ remains in $\rlink(H)$ when $t\in S^{k-1}$. We can do this by first homotoping $f$ such that it agrees with $f|_{S^{k-1}}$ radially on a collar $S^{k-1}\times [0,\epsilon]$ of the boundary. Now we perform the homotopy $\Psi(t,s)$ on the disc $D^k\setminus (S^{k-1}\times [0, \epsilon))$, and extend it across the collar so that on the boundary $S^{k-1}\times \{0\}$ the homotopy restricts to the identity.
        
    \textbf{Step 3}
        At the end of Step 2 we have an isotopy  $\Psi(t,u)$ defined continuously over $D^k$, such that $\Psi(t,0)=f(t)$, and after applying  isotopy extension, at $s=1$ the image of $L_1(t)$ is round, and the image of $L_2(t)$ intersects the disk $D_2(t)$ once only, at the centrepoint of $D_2(t)$ (see Figure \ref{fig:end of isotopy}). We now shrink $L_1(t)$ continuously over the whole $D^k$, using the round disc $D_1(t)$ to guide the isotopy. We do this until $L_1(t)$  has radius $\rho$ for some very small $\rho>0$. Note that during this isotopy the image of the embedding on $\partial D^k$ remains a round Hopf link. Now we consider our resultant $f$ over $D^k$, and use Lemma~\ref{lem: rounding one unknot} to homotope $f|_{L_2}$ such that $L_2(t)$ is round for all $t\in D^k$. We extend this to $L_1(t)$ by isotopy extension, but note that since $L_1(t)$ can be made arbitrarily small, we can choose it small enough so that it is unaffected by the isotopy.
\end{proof}

\section{Computing the homotopy type}\label{sec: homotopy type}

In this section, we prove our main result: that $\rlink(H)$ is homotopy equivalent to the prism manifold formed as the quotient of $S^3$ by the quaternion group $\Q_8$. First consider the space $\rlink(U)$ consisting of embeddings of a single round unknot $U$ in $\R^3$. Each $\rho \in \rlink(U)$ is determined by three parameters $(p,r,\ell)$ where:

\begin{itemize}
    \item $p\in \R^3$ is the center of the canonical disk of $\rho$,
    \item $r>0$ is the radius of the canonical disk of $\rho$,
    \item $\ell\in \R P^2$ denotes the line normal to the the canonical disk of $\rho$.

\end{itemize}
Therefore $\rlink(U)$ is homeomorphic to $\R^3\times \R_+\times \R P^2$, a 6-dimensional manifold homotopy equivalent to $\R P^2$. Note that if we orient the image of $\rho$, this determines an orientation on the canonical disk it bounds, and thus picks out a choice of unit normal vector. Thus, the space $\rlink(U)^+$ of round embeddings of single oriented unknot  in $\R^3$ is homeomorphic to $\R^3\times \R_+\times S^2$.

For any embedding $\rho\in \rlink(H)$, there is a neighborhood where we can vary the embeddings of each component independently.  Indeed, since each component is compact, there is a minimum distance between them which varies continuously with $\rho$. Thus every round embedding of each component near $\rho$ will still yeild a Hopf link. It follows that $\rlink(H)$ is locally homeomorphic to an open subset of the product space $\rlink(U)\times\rlink(U)$,  hence a non-compact 12-manifold. 

Let $\remb(H,\R^3)\subset \emb(H,\R^3)$ denote the subspace of round embeddings equipped with a unit speed parametrisation of each circle.  Reparametrising gives a free action of the compact Lie group $K=\Isom(S^1\sqcup S^1)=\big(\Or(2)\times \Or(2)\big)\rtimes \Z/2$ on  $\remb(H,\R^3)$, where $\Or(2)$ acts as the isometry group of each circle, and $\Z/2$ exchanges the two circles. We can identify the quotient $\remb(H,\R^3)/K$ with $\rlink(H)$. 

Let $\prlink(H)^\pm$ be the intermediate quotient  $\remb(H,\R^3)/(\SO(2)\times \SO(2))$. 
Since $\SO(2)\times \SO(2)$ is the identity component of $(\Or(2)\times \Or(2))\rtimes \Z/2$, the map $\prlink(H)^\pm\rightarrow \rlink(H)$ is an 8-fold covering with deck group $K/(\SO(2)\times\SO(2))\cong(\Z/2\times \Z/2)\rtimes\Z/2$. Thus $\prlink(H)^\pm$ is the space of oriented, labelled, round, unparametrised embeddings of a Hopf link. Here the ``$\mathcal{P}$'' stands for ``pure", since we may not exchange the two components. Depending on the orientations of the two components, the oriented linking number is either $+1$ or $-1$. Since the oriented linking number is an isotopy invariant, this means $\prlink(H)^\pm$ has at least two connected components. Let $\prlink(H)^+\subset \prlink(H)^\pm$ be the subspace of those embeddings with  $+1$ oriented linking number.

\begin{lem}\label{lem:8FoldCover}
$\prlink(H)^+$ is a connected, 4-fold covering of $\rlink(H)$  with deck group $G=\Z/2\times \Z/2$. The first $\Z/2$-factor reverses the orientations of both link components, while the second  exchanges the labels of the components.
\end{lem}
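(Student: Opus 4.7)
First, I would identify the subgroup $G$ of the deck group $D = K/(\SO(2)\times\SO(2))$ that preserves the $+1$-linking-number component. Writing $a$, $b$, $c \in D$ for the involutions reversing the orientation of the first component, reversing the orientation of the second, and exchanging the two components respectively, the computation $c(ab)c = (cac)(cbc) = ba = ab$ shows that $ab$ and $c$ commute. Reversing either orientation negates the linking number while exchanging components preserves it, so the stabiliser of the $+1$-linking-number component is $G = \{1, ab, c, abc\} \cong \Z/2 \times \Z/2$, with generators as in the statement.

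Next, I would verify that $\prlink(H)^+ \to \rlink(H)$ is a regular $4$-fold covering with deck group $G$. The linking number is an integer-valued isotopy invariant, hence locally constant on $\prlink(H)^\pm$, so $\prlink(H)^\pm = \prlink(H)^+ \sqcup \prlink(H)^-$ as open-closed subspaces. Each $D$-orbit in a fiber of $\prlink(H)^\pm \to \rlink(H)$ consists of eight points, four with linking number $+1$ and forming a single $G$-orbit; since the $D$-action on $\prlink(H)^\pm$ is free, so is the $G$-action on $\prlink(H)^+$, and the restricted map is a regular $4$-fold cover with deck group $G$.

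The main obstacle is proving that $\prlink(H)^+$ is connected; equivalently, since the cover is regular, that the monodromy map $\pi_1(\rlink(H), H) \to G$ is surjective. I would realise generators of $G$ via rigid rotations of $\R^3$ fixing $H$ setwise. For any such rotation $R \in \SO(3)$, a path in $\SO(3)$ from $\id$ to $R$ yields a loop in $\rlink(H)$ based at $H$ whose monodromy records how $R$ permutes labels and orientations on the $8$-point fiber of $\prlink(H)^\pm$. Rotation by $180^\circ$ about the $x$-axis, $(x,y,z) \mapsto (x,-y,-z)$, preserves each of $L_1, L_2$ setwise and reverses both orientations, producing monodromy $ab$. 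Rotation by $180^\circ$ about the axis $\{(\tfrac{1}{2}, t, t) : t \in \R\}$ is the involution $(x,y,z) \mapsto (1-x, z, y)$ swapping $L_1$ and $L_2$; parametrising the components as $(\cos\theta, \sin\theta, 0)$ and $(1+\cos\phi, 0, \sin\phi)$ shows that the images are identified via $\phi = \pi - \theta$, so both orientations are reversed and the monodromy is $abc$. Since $ab$ and $abc$ generate $G$, the monodromy is surjective and $\prlink(H)^+$ is connected.
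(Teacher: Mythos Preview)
Your proof is correct and follows essentially the same approach as the paper: both identify $G$ as the subgroup of the deck group $(\Z/2\times\Z/2)\rtimes\Z/2$ stabilising the linking number, and then exhibit explicit rigid rotations of $\R^3$ (about the $x$-axis and about an axis through the midpoint of $\arc(H)$) whose associated paths in $\prlink(H)^+$ realise generators of $G$. The only small omission is that your reformulation of connectedness of $\prlink(H)^+$ as surjectivity of the monodromy $\pi_1(\rlink(H),H)\to G$ presupposes that the base $\rlink(H)$ is path-connected, which the paper invokes via Theorem~\ref{abcthm: round space equivalence}; incidentally, your careful check that the swap rotation has monodromy $abc$ rather than $c$ is more precise than the paper's informal statement, though either element suffices since both lie in $G$.
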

\begin{proof}
We know that the deck group of $\prlink(H)^\pm\rightarrow \rlink(H)$ is isomorphic to $(\Z/2\times\Z/2)\rtimes \Z/2$ Let $H=C_1\sqcup C_2$ be the round embedding described in Example \ref{example:Basic Hopf}. Let $\alpha_i$ reverse the orientation of $C_i$ and let $s$ swap $C_1$ and $C_2$. Each $\alpha_i$ changes the sign of the oriented linking number, hence exchanges $\prlink(H)^+$ and $\prlink(H)^-$. However, their product $\alpha=\alpha_1\alpha_2$ changes both orientations simultaneously and can be achieved by a path of round embeddings in $\prlink(H)^+$ starting at $H$ that flips over both $C_1$ and $C_2$ (rotation by $\pi$ about the $x$-axis). Similarly, the automorphism $s$ which exchanges $C_1$ and $C_2$ may be obtained by path of round embeddings (a rotation about $(1/2,0,0)$, the midpoint of the arc of intersection). A schematic of these rotations is depicted in Figure~\ref{fig:rotations}.

Since $\rlink(H)$ is connected by Theorem \ref{abcthm: round space equivalence}, this implies  $\prlink(H)^\pm$ has exactly two connected components, which are exchanged by the action of either $\alpha_i$.  On the other hand, $\alpha$ and $s$ commute, and generate a normal subgroup of the deck group isomorphic to $\Z/2\times \Z/2$. 
In particular, $\prlink(H)^+$ is a 4-fold covering of $\rlink(H)$ with deck group $G=\Z/2\times \Z/2$, generated by $\{\alpha,s\}$. 
\end{proof}

\begin{figure}
        \centering
        \begin{tikzpicture}
            \draw (1,0) ellipse (1 and 1);                  
            \filldraw[white](0.2,-0.6) circle (2pt);
            \draw (0,0) ellipse (1 and 0.6);                
            \filldraw[white](0.2,0.6) circle (2pt);
            \draw (1,1) arc[start angle=90, end angle=180, radius=1];
            \draw[->] (3,0.15) arc[start angle=60, end angle=240, radius=.2];
            \node at (3.3,0) {$\alpha$};
            \draw[gray] (-2,0)--(3,0);             
            \draw[thick,->,red, dotted] (-2,0)--(3,0); 
            \draw[->,gray] (0,-2)--(0,2);
            \draw[->,gray] (-1.8,0.6)--(2.5,-0.8);
            \filldraw(0,0) circle (.5pt);
            \filldraw(1,0) circle (.5pt);      
            \draw[thick,->,red, dotted] (-.3,-1.8)--(1.3,1.8);           
            \draw[->] (1.05,1.7) arc[start angle=140, end angle=350, radius=.2];
            \node at (1.6,1.8) {$s$};          
            \draw[thick,blue] (0,0)--(1,0);
            \filldraw[blue] (0.5,0) circle (.5pt); 
        \end{tikzpicture}
        \caption{The basic model $H$, and the axes of rotation for the two generators $\alpha$ and $s$.}
        \label{fig:rotations}
    \end{figure}
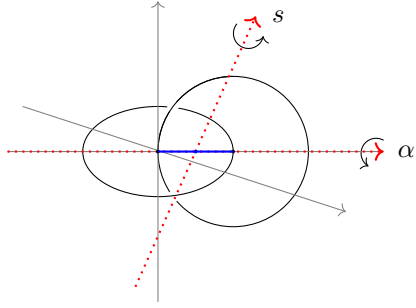

We therefore have that $\prlink(H)^+$ is a connected component of $\prlink(H)^\pm$ corresponding to linking number $+1.$ For any embedding $\rho\in \prlink(H)^+$, the link components are oriented and carry distinct labels $C_1,~C_2$.

\begin{lem}\label{lem:PRHL Homotopy type}
There exists a $G$-equivariant deformation retraction of $\prlink(H)^+$ onto a subspace $Y$ homeomorphic to $\SO(3)$.
\end{lem}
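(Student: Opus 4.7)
The plan is to exhibit $Y$ as the $\SO(3)$-orbit of a specific $G$-symmetric Hopf link, and to obtain the deformation retraction from the principal $\SO(3)$-bundle structure on $\prlink(H)^+$ with contractible base.

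Let $\rho_0 = (C_1^0, C_2^0) \in \prlink(H)^+$ be the link with $C_1^0$ the unit circle in the $xy$-plane centred at $(1/2, 0, 0)$ and $C_2^0$ the unit circle in the $xz$-plane centred at $(-1/2, 0, 0)$, oriented so that the linking number is $+1$. Set $Y := \SO(3) \cdot \rho_0$. The generator $\alpha \in G$ (reversing orientations of both components) is realised on $\rho_0$ by rotation by $\pi$ about the $x$-axis, and the generator $s \in G$ (label swap) is realised by rotation by $\pi$ about the line $\R(0,1,1)$ through the origin, which exchanges both centres and both planes. Thus $G\cdot \rho_0 \subset \SO(3)\cdot\rho_0 = Y$, so $Y$ is $G$-invariant. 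The $\SO(3)$-stabiliser of $\rho_0$ is trivial: a rotation preserving $C_1^0$ as an oriented circle must fix its centre $(1/2,0,0)$ and its normal $e_3$, and no nontrivial rotation about the origin does both. Hence $Y \cong \SO(3)$.

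Next I would verify that the $\SO(3)$-action on $\prlink(H)^+$ is free and proper. Properness follows from compactness of $\SO(3)$, and freeness because any nontrivial rotation fixing a round Hopf link as an oriented labelled link would force both components to share a common rotational-symmetry axis through the origin, which is incompatible with being linked. Consequently $\pi\colon \prlink(H)^+ \to B := \prlink(H)^+/\SO(3)$ is a principal $\SO(3)$-bundle with $\pi^{-1}([\rho_0]) = Y$. Because the $G$-action on $\prlink(H)^+$ commutes with the $\SO(3)$-action, it descends to an action on $B$ that fixes $[\rho_0]$.

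It then suffices to exhibit a $G$-equivariant deformation retraction of $B$ onto $[\rho_0]$, which will lift to the required $G$-equivariant deformation retraction of $\prlink(H)^+$ onto $Y$. Concretely, I would normalise a representative $\rho$ of a class in $B$ in four stages: translate by $-t p_c$ where $p_c = (p_1+p_2)/2$ to bring the midpoint of the centres to the origin; uniformly rescale by $|p_1-p_2|^{-t}$ so the distance between the centres becomes $1$; interpolate $(r_1, r_2) \to (1,1)$ linearly; and move $(v_1, v_2)$ along great-circle paths in $(S^2)^2$ to a pair of perpendicular unit normals transverse to $p_1-p_2$. Each stage is well-defined on $B$ and $G$-equivariant since $p_c$, $|p_1-p_2|$, and the unordered data of radii and of normals are all compatible with the actions of $\alpha$ and $s$.

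The main obstacle is to ensure each stage remains inside $\prlink(H)^+$, i.e., preserves the linking number $+1$. The key observation is that after normalising the midpoint to the origin and the scale to $1$, the linking condition cuts out a star-shaped open region of the remaining shape-parameter space around the class of $\rho_0$: a direct computation shows that in the $(r_1, r_2)$-slice (with positions and normals standardised), the linked region is exactly $\{r_1+r_2 > 1,\; |r_1-r_2|<1\}$, which is star-shaped with respect to $(1,1)$, and the normal interpolation along great circles may then be carried out within the open linked locus. Combined with the contractibility of $B$ and the $G$-fixed point $[\rho_0]$, standard equivariant techniques for finite-group actions on contractible manifolds give the desired $G$-equivariant deformation retraction.
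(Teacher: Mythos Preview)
Your strategy---identify $Y$ as an $\SO(3)$-orbit, pass to the quotient $B=\prlink(H)^+/\SO(3)$, and retract $B$ equivariantly to a point---is a genuinely different route from the paper's, which instead performs a chain of explicit $G$-equivariant retractions $\prlink(H)^+\supset X_0\supset\cdots\supset X_4=Y$ inside the total space, using the \emph{arc of intersection} of the two spanning discs as a geometric anchor at every step. The structural picture you set up (free $\SO(3)$-action, $G$ commuting with it, $G$ fixing $[\rho_0]$) is correct and pleasant.

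However, the execution of the retraction of $B$ has real gaps. First, your stage~3 (linear interpolation of radii to $(1,1)$) is justified only by the star-shapedness computation ``with positions and normals standardised,'' but the normals are not standardised until stage~4; for other normal configurations the linked region in the $(r_1,r_2)$-plane is different, and you have not shown the straight segment to $(1,1)$ avoids the locus where the two circles intersect. Second, stage~4 is not well-defined: ``move $(v_1,v_2)$ along great-circle paths to a pair of perpendicular unit normals transverse to $p_1-p_2$'' specifies neither a canonical target pair nor a canonical path (antipodal issues, and an $S^1$-ambiguity remains even in $B$), and you give no argument that such a path stays in the linked locus or that the construction is $s$-equivariant under $v_1\leftrightarrow v_2$. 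Third, a $G$-equivariant deformation retraction of $B$ onto $[\rho_0]$ does not automatically lift to a $G$-equivariant deformation retraction of the principal bundle onto the fibre $Y$; you need a $G$-equivariant trivialisation (equivalently a $G$-equivariant section), and ``standard equivariant techniques'' is not a proof here. The paper sidesteps all three issues by never leaving $\prlink(H)^+$: rotating the discs about the line containing their arc of intersection makes the dihedral angle $\pi/2$ while visibly preserving linking; enlarging only the smaller radius and then rescaling keeps the arc of intersection as a witness to linking; and a final translation along the arc brings the centres to its endpoints. If you want to salvage your approach, replacing $p_c$ and $p_1-p_2$ by the midpoint and direction of the arc of intersection (which are always defined and manifestly $G$-invariant/equivariant) would give you the missing control.
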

\begin{proof}

Given $\rho\in \prlink(H)^+$, let $p_i=p_i(\rho)$ be the center point of $C_i$, and let $r_i=r_i(\rho)$ be the radius of $C_i$ for $i=1,2$. Since $C_i$ is oriented, it has a well-defined unit normal vector $n_i$. Finally, let $m$ be the midpoint of the arc of intersection and let $q_i$ be the endpoint of the arc of intersection $\arc(\rho)$ lying in the disk bounded by $C_i$. Observe that $\prlink(H)^+$ has a free action by $\Isom^+(\R^3)=\R^3\rtimes \SO(3)$. Here the $\R^3$ factor acts by translation and $\SO(3)$ acts by rotations. 

We will construct a sequence of deformation retractions from $\prlink(H)^+$ onto a subspace $Y$, which we will show is homeomorphic to $\SO(3).$ At each stage, we will show that the deformation retraction is $G$-equivariant.

\begin{itemize}
    \item Let $X_0\subset \prlink(H)^+$ denote the subspace where $m=(1/2,0,0)$.  Using the free translation action of $\R^3\leq \Isom^+(\R^3)$, we see that $\prlink(H)^+$ deformation retracts onto $X_0$. Since the action of $G$ does not affect the image of the embedding, it acts trivially on $m$, hence this deformation retraction is $G$-equivariant.
    \item Let $\theta$ be the angle between $n_1$ and $n_2$. This is the dihedral angle made between the two planes containing $C_1$ and $C_2$, computed via the formula $\langle n_1,n_2 \rangle =\cos(\theta)$. By rotating $C_1$ and $C_2$ simultaneously about the arc of intersection, we can vary the dihedral angle to be any value in the open interval $(0,\pi)$ while leaving the arc of intersection fixed. Define a deformation retraction from $X_0$ onto a subspace $X_1$ where the dihedral angle of each embedding is $\pi/2.$ The fiber of this deformation retraction is an open interval $(0,\pi)$  hence homeomorphic to $\R$. Since $\alpha$ negates both $n_1$ and $n_2$ while $s$ switches $n_1$ and $n_2$, neither changes $\theta$. Therefore the deformation retraction $X_0\rightarrow X_1$ is $G$-equivariant.
    \item Next consider the function $f\colon X_1\rightarrow \R$ defined by $f(\rho)=r_1(\rho)-r_2(\rho)$.  The subspace $X_2=f^{-1}(0)$ consists of those embedding where the radii of $C_1$ and $C_2$ are equal.  We now define a strong deformation retraction onto $X_2$ by increasing the radius of the smaller of the two circles until $r_1=r_2$, while leaving the arc of intersection fixed. 
    In this case the fiber over a point in $X_2$ is homeomorphic to $\R.$ Observe that $\alpha$ acts trivially on $(r_1,r_2)$ while $s$ permutes the two coordinates. Since the deformation retraction of $X_1$ onto $X_2$ increases the $\min\{r_1,r_2\}$, it depends only on the set $\{r_1,r_2\}$, hence is equivariant with respect to the action of $G$.

    \item Now $C_1$ and $C_2$ have a common radius $r=r_1=r_2$. We rescale the common radius $r$ to be 1, again leaving the arc of intersection fixed. This is a deformation retraction onto a subspace $X_3\subset X_2$. Since  the common radius $r$ can be any positive number, the fiber of the retraction from $X_2$ to $X_3$ is homeomorphic to $ \R_+\cong \R$. As we observed above, $G$ acts trivially on $(r_1,r_2)$ when they are equal. Therefore the retraction $X_2$ to $X_3$ is also $G$-equivariant.
    
    \item For the last step, we arrange that the endpoints of $\arc(\rho)$ are $p_1$ and $p_2$. The diameter of $C_2$ that is parallel to $\arc(\rho)$ meets $C_2$ at two points. Let $y_1$ be the endpoint of this diameter which is closest to $p_1$. (If $V$ is the hyperplane passing through $p_2$ which is orthogonal to the line containing $\arc(\rho)$, then $y_1$ is the endpoint on the same side of $V$ as $\arc(\rho)$). Similarly, the diameter of $C_1$ parallel to $\arc(\rho)$ meets $C_1$ at two points and we set $y_2$ to be the endpoint closest to $p_2$. Let $\gamma_i$ be the vector from  $p_i$ to $y_i$. By construction, the segment from $p_1$ to $y_2$ is parallel to the segment from $p_2$ to $y_1$ and has the same length, namely the common radius $r=1$.  Since $\gamma_i$ translates $p_i$ to $y_i$,  it follows that
    $\gamma_1=-\gamma_2$. A schematic is shown in Figure~\ref{fig:arc int homotopy}.

    For $t\in [0,1]$, we simultaneously translate $C_1$ along $\frac{t}{2}\gamma_1$ and $C_2$ along $\frac{t}{2}\gamma_2$ so that when $t=1$, both have been translated distance $\frac{1}{2}$. After the motion, the image of $p_1$ and $p_2$ are at the endpoints of the same segment of length $r=1$ parallel to $\arc(\rho)$, hence $\arc(\rho)$ is the straightline segment from $p_1$ to $p_2$. Let $X_4$ be the subspace of $X_3$ where the endpoints of the arc of intersection are $p_1$ and $p_2$.  Since these translations do not changed the radii, the midpoint of $\arc(\rho)$ nor the angle $\theta$, we have thus defined a deformation retraction from $X_3$ onto $X_4$. The action of $\alpha$ leaves both $\gamma_i$ invariant, while $s$ exchanges the $p_i$ and the $y_i$, hence exchanges $\gamma_1$ with $\gamma_2$. Thus, the deformation retraction is $G$-equivariant.

    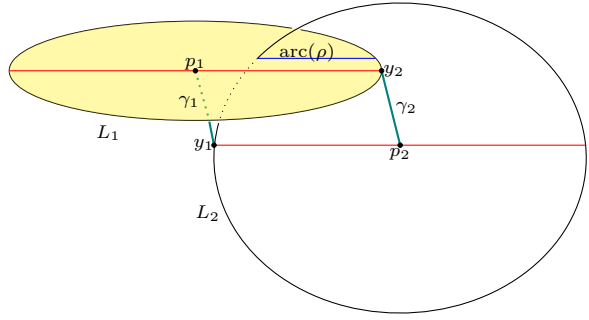
\begin{figure}
        \centering
        \begin{tikzpicture}[scale=1.65]
            \draw[thick,teal](4,0)--(4.15,-.6);
            \draw[thick,teal,dotted](2.5,0)--(2.612,-.4);
            \draw[thick,teal](2.65,-.6)--(2.612,-.41);
            \draw(2.5,0) ellipse (1.5 and 0.4);
            \filldraw[white](3.31,0.32) circle (1pt);
            \filldraw[fill=yellow!90, fill opacity=.4, draw=none](2.5,0) ellipse (1.5 and 0.4);
            \draw(3,.1) arc (140:-193:1.5 and 1.25);
            \draw[dotted](3,.1) arc(140:165:1.5 and 1.25);
            \draw[red] (1,0)--(4,0);
            \draw[red] (2.65,-.6)--(5.65,-.6);
            \draw[blue] (3,.1)--(3.95,.1);
            
            \filldraw(2.5,0) circle (.5pt);
            \filldraw(4.15,-.6) circle (.5pt);
            \filldraw(2.65,-.6) circle (.5pt);
            \filldraw(4,0) circle (.5pt);
            \node at (2.5,.06) { $\scriptstyle p_1$};
            \node at (4.15,-.67) { $\scriptstyle p_2$};
            \node at (4.1,0) { $\scriptstyle y_2$};
            \node at (2.57,-.6) { $\scriptstyle y_1$};
            \node at (4.2,-.3) { $\scriptstyle \gamma_2$};
            \node at (2.45,-.25) { $\scriptstyle \gamma_1$};
            \node at (3.4,.15) { $\scriptstyle \arc(\rho)$};
            \node at (1.8,-.5) {$\scriptstyle L_1$};
            \node at (2.6,-1.15) {$\scriptstyle L_2$};
        \end{tikzpicture}
        \caption{A schematic of the last step of the deformation retraction, from $X_3$ to $X_4$. $L_1$ and $L_2$ both have radius 1 and meet along a right angle. We move $L_1$ along $\gamma_1$ and simultaneously move $L_2$ along $\gamma_2$ in the opposite direction until the endpoints of the arc of intersection $\arc(\rho)$ coincide with $p_1$ and $p_2$.}
        \label{fig:arc int homotopy}
    \end{figure}
    
\end{itemize}
Set $Y=X_4\subset \prlink(H)^+$. Then $Y$ is the subspace of embeddings satisfying
\begin{enumerate}
    \item $m=(1/2,0,0)$,
    \item $\theta=\angle(n_1,n_2)=\pi/2$,
    \item $r_1=r_2=1$,
    \item $p_1$ and $p_2$ are the endpoints of the arc of intersection.
\end{enumerate}
In particular, the arc of intersection is a unit length line segment with midpoint equal to $(1/2,0,0)$.  Setting $v=p_2-p_1$, every point of $Y$ is completely specified by the the pair of unit vectors $(n_1,v)$ since the positive linking condition implies that $n_2$ must be the cross product $n_1\times v$. The triple $(n_1,v,n_2)$ therefore determines a unique frame in $\SO(3).$  Thus $Y$ is homeomorphic to $\SO(3)$ and there is a $G$-equivariant deformation retraction of $\prlink(H)^+$ onto $Y$.
\end{proof}

\begin{rem}
In fact, the above proof shows that $\prlink(H)^+\cong \R^9\times \SO(3)$, and that there exists a $G$-equivariant projection onto the $\SO(3)$-factor.
\end{rem}

Recall that the universal covering group of $\SO(3)$ is $\SU(2)$, the group of unit quaternions, and thus is homeomorphic to $S^3$. Multiplication by unit quaternions is length-preserving with respect to the standard Euclidean metric on $\R^4$ and only fixes the origin. It therefore acts by orientation-preserving isometries on the standard round metric on $S^3$. We thus we obtain a homomorphism $\SU(2)\rightarrow \Isom^+(S^3)=\SO(4)$ where each $q\in \SU(2)$ acts by conjugation $c_q\colon x\mapsto qxq^{-1}$. The kernel of this map is the center of $\SU(2)$, namely $\{\pm 1\}$. On the other hand, this action clearly fixes a point in $S^3$ (corresponding to $1\in \SU(2)$) hence its image lies in $\SO(3)\leq \SO(4)$. For dimension reasons the map $\SU(2)\rightarrow \SO(3)$ must be onto, giving rise to a central extension
\begin{equation}\label{eqn:S3cover}1\to \Z/2\to \SU(2)\to \SO(3)\to 1\end{equation}
where $\Z/2\cong \{\pm 1\}\leq \SU(2)$. 

The \emph{quaternion group} $\Q_8$ is the multiplicative group consisting of the 8 quaternions $\{\pm 1,\pm i,\pm j,\pm k\}\leq \SU(2)$, with relations 
\[i^2=j^2=k^2=-1, ~ij=k,~jk=i,~ki=j\]
The image of $\Q_8$ in $\SO(3)$ is the subgroup isomorphic to $\Z/2\times \Z/2$, generated by rotations by $\pi$ in $\R^3$ about each of the three coordinate axes.  In terms of the conjugation action described above, the nontrivial elements are represented by the matrices:\[\overline{\i}=\left(\begin{array}{ccc}1&0&0\\0&-1&0\\0&0&-1\end{array}\right),~\overline{\j}=\left(\begin{array}{ccc}-1&0&0\\0&1&0\\0&0&-1\end{array}\right),~\overline{\text{k}}=\left(\begin{array}{ccc}-1&0&0\\0&-1&0\\0&0&1\end{array}\right).\]

Thus, by Equation \ref{eqn:S3cover}, $\Q_8$ may also be regarded as a central extension  \begin{equation}\label{eqn: Quaternion}
    1\rightarrow \Z/2\rightarrow \Q_8\rightarrow \Z/2\times \Z/2\rightarrow 1
\end{equation}
where the pre-image of each $\Z/2$ subgroup of $\Z/2\times \Z/2$ is isomorphic to $\Z/4$. Combining Equations \ref{eqn:S3cover} and \ref{eqn: Quaternion}, we obtain that $\SU(2)/\Q_8\cong\SO(3)/\left(\Z/2\times \Z/2\right)$ where $\Z/2\times \Z/2=\left\langle \overline{\i},\overline{\j}, \overline{\text{k}}\right\rangle$.

\begin{restate}{Theorem}{abcthm: homotopy type}
The embedding space $\link(H)$ is homotopy equivalent  to $S^3/\Q_8.$
\end{restate}
\begin{proof}By Theorem \ref{abcthm: round space equivalence}, it is enough to show that $\rlink(H)\simeq S^3/\Q_8$. By Lemma \ref{lem:8FoldCover}, we know that $\prlink(H)^+$ is 4-fold cover of $\rlink(H)$ with deck group $G=\Z/2\times \Z/2$, which acts by reversing the orientation of both components and exchanging the two components. Recall that  $\alpha$ is the element that reverses the orientations of both components, while $s$ is the generator that switches $C_1$ and $C_2$.

By Lemma \ref{lem:PRHL Homotopy type}, we know that $\prlink(H)^+$ is homotopy equivalent to $\SO(3)$, realised as the subspace of round embeddings $Y$ satisfying:
\begin{itemize}
\item the midpoint of the arc of intersection is $m=(1/2,0,0)$;
\item the planes containing $C_1$ and $C_2$ are orthogonal;
\item $C_1$ and $C_2$ have radius 1 and linking number $+1$;
\item the centers $p_1$ and $p_2$ of $C_1$ and $C_2$, respectively, are the endpoints of the arc of the intersection.
\end{itemize}
Each element of $Y$ defines a unique frame in $\SO(3)$ as the ordered triple of unit vectors $(v,n_1, n_2)$, where $v=p_2-p_1$ and $n_i$ is the normal vector to the plane containing $C_i$. The action of the deck group preserves $Y$, since it changes the orientations of the $C_i$ and exchanges the labels. Since the homotopy equivalence from $\prlink(H)^+\rightarrow Y$ is equivariant to with respect to the deck group $G$, to prove the corollary, it suffices to show that $Y/G\cong \SU(2)/\Q_8$.

Starting from the basepoint $H$, the element $\alpha$ can be realized by a rotation by $\pi$ about the $x$-axis. The generator $s$ can be realized by a rotation by $\pi/2$ about the $x$-axis followed by a rotation of $\pi$ about the axis parallel to the $z$-axis and passing through $(1/2,0,0).$ As elements of $\SO(3)$, these two rotations are represented by the matrices \[\alpha=\left(\begin{array}{ccc}1&0&0\\0&-1&0\\0&0&-1\end{array}\right),~s=\left(\begin{array}{ccc}-1&0&0\\0&0&1\\0&1&0\end{array}\right).\]
 Since $\alpha,s$ are commuting orthogonal matrices, they are simultaneously diagonalisable, and $\alpha$, $s$ and the product $\alpha\cdot s$ diagonalise to the three matrices
\[\overline{\i}=\left(\begin{array}{ccc}1&0&0\\0&-1&0\\0&0&-1\end{array}\right),~\overline{\j}=\left(\begin{array}{ccc}-1&0&0\\0&1&0\\0&0&-1\end{array}\right),~\overline{\text{k}}=\left(\begin{array}{ccc}-1&0&0\\0&-1&0\\0&0&1\end{array}\right).\]
which are precisely the images of $i$, $j$, and $k$ under the universal covering map $\SU(2)\rightarrow \SO(3).$ Therefore $Y/G$ is homeomorphic to $\SO(3)/\left\langle \overline{\i},\overline{\j}, \overline{\text{k}}\right\rangle\cong \SU(2)/\Q_8$.
\end{proof}

\section{Hopf links in $S^3$}\label{sec: great hopf links}

In this section we briefly discuss the translation of the above results to embeddings of Hopf links in $S^3$. Let $\link(H,S^3)$ denote the space of smooth, unparametrised embeddings of a Hopf link in $S^3$. In this case our basepoint is the embedding  consisting of the two great circles \[H=\{(x,y,0,0)\in \R^4\mid x^2+y^2=1\}\cup \{(0,0,z,w)\in \R^4\mid z^2+w^2=1\}.\]

\begin{defn}
Recall that a great circle of $S^3$ is the  intersection of $S^3$ with a 2-plane in $\R^4$. 
Let $\rlink(H,S^3)$ be the subspace of $\link(H,S^3)$ where each component is a great circle, and the two components lie in orthogonal subspaces of $\R^4$. We will refer to a point of $\rlink(H,S^3)$ as a \emph{great Hopf link}.
\end{defn}

The Smale conjecture (see (7) in \cite[Appendix]{Hatcher}) is also equivalent to the assertion that the unparametrised embedding space of an unknot in $S^3$ is homotopy equivalent to the space of great circles. Using this fact as our starting point, an argument analogous to that of Section \ref{sec: round equiv smooth} yields the following.

\begin{thm}\label{thm: equiv great and smooth hl in s3}
The inclusion $\rlink(H,S^3)\hookrightarrow\link(H,S^3)$ is a homotopy equivalence. 
\end{thm}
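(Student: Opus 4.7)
The plan is to mimic the proof of Theorem \ref{abcthm: round space equivalence} from Section \ref{sec: round equiv smooth}, with three ingredients adapted to $S^3$: the $S^3$-version of the Smale conjecture (statement (7) in the appendix to \cite{Hatcher}) replaces Lemma \ref{lem: rounding one unknot}; the family of distance tori around a great circle replaces the ellipsoids $E_h$; and a contractibility statement for arcs in a solid torus replaces Lemma \ref{lem: NP SP arc space contractible}. As before, $\link(H,S^3)$ is a Fr\'echet manifold with the homotopy type of a CW complex, so it suffices to show $\pi_k(\link(H,S^3),\rlink(H,S^3))=0$ for every $k\geq 0$; I fix a representing family $f\colon(D^k,\partial D^k)\to(\link(H,S^3),\rlink(H,S^3))$ with components $L_1(t)$ and $L_2(t)$.

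First I would use the $S^3$-Smale conjecture to homotope $f$ rel $\partial D^k$ so that $L_1(t)$ is a great circle for every $t$, with antipodal great circle $L_1^*(t)$. The distance function from $L_1(t)$ foliates $S^3\setminus(L_1(t)\cup L_1^*(t))$ by tori $T_h(t)$, $h\in(0,\pi/2)$, each bounding a solid-torus neighbourhood $N_h(t)$ of $L_1(t)$; these tori play the role of $E_h$. I choose spanning discs $D_2(t)$ for $L_2(t)$ by isotopy extension so that $D_2(t)$ meets $L_1(t)$ transversely in one point, and by residuality of transversality and compactness of $D^k$ obtain a finite open cover $\{U_i\}$ with heights $h_i$ for which $T_{h_i}(t)\pitchfork D_2(t)$ on $U_i$.

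The innermost-disc procedure of Lemma \ref{lem:Isotopy-Construction} takes place entirely in $D_2(t)$ via collar pushes, so it transfers verbatim: the linking-number condition forces all but one arc of the intersection pattern $K_i(t)=T_{h_i}(t)\cap D_2(t)$ to be inessential on $D_2(t)$, and these inessential arcs are removed inductively using parametric collars via Lemma \ref{lem:Boundary-Collar-Contractible}, with the partition-of-unity gluing between the $U_i$ unchanged. After the isotopy, $L_2(t)\cap N_{h_i}(t)$ is a single unknotted arc with endpoints on $T_{h_i}(t)$. Using the solid-torus analog of Lemma \ref{lem: NP SP arc space contractible}, I would straighten this arc to a fixed reference arc, shrink $h_i\to 0$ parametrically to push the remainder of $L_2(t)$ into an arbitrarily small neighbourhood of $L_1^*(t)$ disjoint from a fixed neighbourhood of $L_1(t)$, and apply the $S^3$-Smale conjecture once more to round $L_2(t)$ to a great circle with support disjoint from $L_1(t)$. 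A final deformation retraction on the space of pairs of linked great circles onto the orthogonal pairs (for instance, by flowing the principal angles of the two $2$-planes in $\R^4$ to $\pi/2$) places $f$ in $\rlink(H,S^3)$.

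The main obstacle I anticipate is the solid-torus analog of Lemma \ref{lem: NP SP arc space contractible}: replacing the $3$-ball by $N_h$ and the boundary sphere by the torus $T_h$, the fibration argument must be recast over a different base (pairs of points on $T_h$ realised as endpoints of a radial arc) and one must verify that the relevant fibres (proper unknotted arcs with fixed endpoints in the prescribed isotopy class) remain contractible. This should follow from the same Smale-conjecture ingredients used elsewhere in the paper, but the torus boundary introduces bookkeeping that is absent in the $\R^3$ case. The final orthogonalization step also has no $\R^3$ counterpart, but should be routine given the explicit description of the space of pairs of linked oriented $2$-planes in $\R^4$ as a contractible-over-$\SO(4)$ parameter space of principal angles.
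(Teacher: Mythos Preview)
Your proposal diverges from the paper in one structural choice: you replace the ellipsoids $E_h$ by the distance tori $T_h$ around $L_1$. The paper keeps the sphere picture: after choosing a totally geodesic disc $D_1$ bounded by the great circle $L_1$, it builds $E_h$ as a $2$-sphere with equator $L_1$ and poles on the great circle perpendicular to $D_1$ (which is $L_1^*$). Since $E_h$ is still a sphere containing $L_1$ as its equator, the $\pm$ hemisphere labelling and Lemma~\ref{lem: NP SP arc space contractible} port verbatim from the $\R^3$ argument. The paper then finishes by observing that $S^3\setminus B_h$ is again a ball: straightening the complementary arc of $L_2$ onto $L_1^*$ makes $L_2$ equal to $L_1^*$, so no separate rounding of $L_2$, shrinking step, or final orthogonalisation is required.

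Your torus route has a genuine gap beyond the solid-torus arc lemma you flag. Because $L_1$ lies in the interior of $N_h$ rather than on $T_h$, the single point $L_1\cap D_2$ sits in the \emph{interior} of a complementary region of $D_2\setminus(T_h\cap D_2)$, not on an arc. There is thus no distinguished arc $\alpha$: for every arc $\kappa$, one of the two subdiscs of $D_2\setminus\kappa$ misses $L_1\cap D_2$, so the innermost-arc procedure (which in the $\R^3$ proof halts precisely because $L_1\cap D_2$ lies \emph{on} $\alpha$) removes \emph{all} arcs and leaves $L_2$ entirely inside $N_h$, not a single properly embedded arc as you anticipate. The lemma you would actually need is a parametric statement about unknots in a solid torus linking the core once, not about arcs. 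Your endgame is also off: shrinking $h\to 0$ contracts $N_h$ toward $L_1$, not toward $L_1^*$, so it does not push $L_2$ into a neighbourhood of $L_1^*$. These issues may be repairable, but the paper's sphere-based construction sidesteps all of them.
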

\begin{proof}
    The proof that the relative homotopy groups $\pi_k(\link(H,S^3),\rlink(H, S^3))$ are trivial follows a very similar procedure to the proof of Theorem~\ref{abcthm: round space equivalence}, where we showed that the relative homotopy groups $\pi_k(\link(H),\rlink(H))$ are trivial. We highlight in the bullet points below the key inputs we use in the $S^3$ case, but the general set-up and the most technical part of the proof follows the proof of Theorem~\ref{abcthm: round space equivalence} without change.
    \begin{itemize}
        \item We first use (7) in \cite[Appendix]{Hatcher} to homotope the link component $L_1(t)$ to be a great circle for all $t\in D^k$.
        \item The round component $L_1(t)$ spans a family of flat discs in $S^3$. Make a choice of one of these discs to be $D_1(t)$ -- this can be done continuously over $D^k$. 
        \item To define an ellipsoid, note that there is a unique great circle perpendicular to the disc $D_1(t)$. We define the ellipsoid $E_h$ to have NP and SP on this great circle, at distance $h$ from $D_1(t)$, and choose the maximum height of the ellipsoid to be such that the upper and lower hemispheres of the ellipsoid are disjoint.
        \item The proof to homotope $D_2(t)$ continuously over $D^k$ such that it intersects $D_1(t)$ at its centrepoint is identical to that in the proof of Theorem~\ref{abcthm: round space equivalence}, except for the arc through the NP and SP of the ellipsoid we choose at one of the intermediate stages. In the $S^3$ setting, we homotope the arc in $E_i$ to the arc passing through the NP and SP which agrees with the segment of the great circle perpendicular to $D_1(t)$. This property will then pass to ellipsoids of smaller height, which we need to make the homotopy continuous over $D^k$.
        \item At this point instead of shrinking $L_1$ (which would mean it does not remain a great circle) we can consider the complement of the ball $B_i$ bounded by the ellipsoid $E_i$ in $S^3$, which is also a ball with boundary $E_i$. We can homotope the arc of $L_2(t)$ lying in this ball to the arc segment given by the the great circle perpendicular to $D_1(t)$. This can once again be done continuously over the $D^k$ since the great circle is the same for ellipsoids of all heights, and this completes the proof.\qedhere
    \end{itemize}
\end{proof}

We now show that $\rlink(H,S^3)$ has the homotopy type of a compact manifold, and thus determine the homotopy type of $\link(H,S^3)$ as well.

\begin{restate}{Theorem}{abcthm: S3 homotopy type}
    The embedding space $\link(H,S^3)$ is homotopy equivalent to $\R P^2\times \R P^2.$
\end{restate}

\begin{proof}
    By Theorem~\ref{thm: equiv great and smooth hl in s3} it is enough to show that the subspace $\rlink(H,S^3)$ is homeomorphic to $ \R P^2\times \R P^2$. Since each 2-plane in $\R^4$ has a unique orthogonal complementary 2-plane, each great circle $\gamma$ determines a \emph{unique} great  circle $\gamma^\perp$ linking $\gamma$ exactly once and lying in the orthogonal complement of the plane containing $\gamma$. Accordingly, the space of oriented, labeled, great Hopf links in $S^3$ with linking number $+1$ can be identified with the oriented Grassmannian \[\Gr_2^+(\R^4)=\SO(4)/(\SO(2)\times \SO(2)).\]
    Regarding $\SO(4)$ as orthonormal bases with determinant $+1$, we see that 
    \begin{enumerate}[(i)]
        \item $\SO(4)$ acts transitively on the space of oriented, labeled, great Hopf links.
        \item The identity coset corresponds to the basepoint $H$.
    \end{enumerate}
    
    As in Section \ref{sec: homotopy type}, to pass from oriented, labeled, great Hopf links to $\rlink(H,S^3)$, we must further quotient by the action of $\langle \alpha, s\rangle\cong \Z/2\times \Z/2$ where $\alpha$ acts by flipping over both link components, and $s$ acts by exchanging the link components. Accordingly, we see that taking the quotient by $\alpha$ yields $\Gr_2(\R^4)$, the space of unoriented 2-planes in $\R^4$. Quotienting further by $s$ then identifies each 2-plane $V\subset \R^4$ with its orthogonal complement $V^\perp$. Thus $\rlink(H,S^3)$ is the space of unordered pairs $\{V,V^\perp\}$ where $V\subset \R^4$ is 2-dimensional, or equivalently the space of all orthogonal splittings $\R^4=V\oplus V^\perp$.

To see that $\rlink(H,S^3)$ is $\R P^2\times \R P^2$, we first argue that $\Gr_2^+(\R^4)$ is homeomorphic to $\Sa^2\times \Sa^2$, closely following the exposition in \cite{Brasil-Ferreira-Vandembroucq}. Regarding $\R^4\cong \Hy$ as the quaternions, the standard inner product is given by $\langle x, y\rangle=\frac{1}{2}(x\bar{y}+y\bar{x}).$ The function $\mu(x,y)=\frac{1}{2}(x\bar{y}-y\bar{x})$ defines a map from $\Hy\times \Hy$ to the subset of pure imaginary quaternions, since  \[\overline{\mu(x,y)}=\frac{1}{2}\overline{(x\bar{y}-y\bar{x})}=\frac{1}{2}(y\bar{x}-x\bar{y})=-\mu(x,y).\]
Thus, when $x\perp y$ it follows from the inner product formula that $\mu(x,y)=y\bar{x}=-x\bar{y}.$
If, moreover, $x,y\in \Sa^3$ are orthogonal unit quaternions, then $\mu(x,y)\in \Sa^2$ is a purely imaginary unit quaternion. Similarly, the function $\nu(x,y)=\frac{1}{2}(\bar{x}y-\bar{y}x)$ defines a map from $\Hy\times\Hy$ to the subset of pure imaginary quaternions, and sends pairs of orthonormal quaternions to $\Sa^2$. 

Let $V_2(\R^4)=\{(x,y)\in \Sa^3\times \Sa^3\mid x\perp y\}$ be the Stiefel manifold of 2-frames in $\R^4$, and consider the map $\xi\colon V_2(\R^4)\to \Sa^2\times \Sa^2$ given by $\xi(x,y)=(\mu(x,y),\nu(x,y)).$ Since $\mu,\nu$ are clearly equivariant with respect to the $\SO(2)$ action given by right multiplication by $e^{i\theta}$, we obtain a quotient map  $\bar{\xi}\colon V_2(\R^4)/\SO(2)\cong \Gr^+_2(\R^4)\rightarrow \Sa^2\times \Sa^2.$ It is not difficult to see that $\bar{\xi}$ is a homeomorphism; see \cite{Brasil-Ferreira-Vandembroucq} for details.

Therefore, to compute the quotient $\rlink(H,S^3)$, we need only determine how $\alpha$ and $s$ act on $\Sa^2\times \Sa^2$. We may regard $\alpha,s$ as acting on $\SO(4)$ by right multiplication by the two matrices,  \[\alpha = \left(\begin{array}{cc|cc}1&0&0&0\\0&-1&0&0 \\\hline0&0&-1&0\\0&0&0&1\end{array}\right),
 ~s = \left(\begin{array}{cc|cc}0&0&1&0\\0&0&0&1 \\\hline1&0&0&0\\0&1&0&0\end{array}\right),\] hence the induced action on  $\Sa^2\times \Sa^2$ is by isometries with respect to the standard product metric. If $(x,y)\in V_2(\R^4)$ span a 2-plane, then $\alpha(x,y)=(x,-y)$. Thus $\mu(x,-y)=-\mu(x,y)$ and $\nu(x,-y)=-\nu(x,y)$, so $\xi(\alpha(x,y))=-\xi(x,y)$. It follows that $\alpha$ acts on $\Sa^2\times \Sa^2$ by the antipodal map $-I$ in both coordinates. Thus we conclude $\Gr_2(\R^4)=\Gr_2^+(\R^4)/\langle \alpha\rangle\cong\Sa^2\times \Sa^2/\langle (-I,-I)\rangle$. According to \cite[Ch.~12.2]{Hillman02}, the only group isomorphic to $\Z/2\times \Z/2$ that acts freely by isometries on $\Sa^2\times \Sa^2$ and contains $(-I,-I)$ is \[\langle(I,I),(-I,I),(I,-I),(-I,-I)\rangle.\]
Since $\langle \alpha,s\rangle$ acts freely, we conclude that $\rlink(H,S^3)$ is $\R P^2\times \R P^2$, as desired.
\end{proof}

\begin{rem}
    Theorem \ref{abcthm: S3 homotopy type} can also be derived from results of Havens--Koytcheff \cite[Corollary 4.4 (e)]{HavensKoytcheff2021}, who show that the parametrised embedding space of a Hopf link in $S^3$ is homotopy equivalent to $\SO(4).$
\end{rem}

We finish our discussion of Hopf links in $S^3$ by introducing an intermediate round embedding space. We show this space is homotopy equivalent to $\rlink(H,S^3)$ and hence $\link(H, S^3)$.

Let $\Sa^3$ denote $S^3$ equipped with its standard round metric. The set of conformal automorphisms of $\Sa^3$ is the group of \emph{M\"obius transformations} $\Mob(\Sa^3)$. While closed geodesics on $\Sa^3$ are exactly the great circles, one could also consider the larger collection of all circles which are the image of a great circle under a M\"obius transformation.

\begin{defn} By a \emph{round circle} in  $\Sa^3$ we will mean any circle which is in the orbit of the great circles under the action of $\Mob(\Sa^3)$. The space $\rlink^{\Mob}(H,S^3)$ of \emph{round Hopf links} is the subspace of $\link(H,S^3)$ where each component is a round circle. 
\end{defn}

The collection of round circles is exactly the set of circles which map to either a Euclidean circle or straight line under stereographic projection from some point of $\Sa^3$. This definition does not depend on which stereographic projection is used. In this sense $\rlink^\Mob(H,\Sa^3)$ is the natural analogue to the space $\rlink(H)$.

There is a chain of inclusions $\rlink(H,S^3)\hookrightarrow \rlink^\Mob(H,S^3)\hookrightarrow \link(H,S^3)$.

\begin{lem}\label{lem: greatcircles he round hl in s3}
    $\rlink(H,S^3)\hookrightarrow \rlink^{\mob}(H,S^3)$ is a homotopy equivalence. 
\end{lem}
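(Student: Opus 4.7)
The plan is to use hyperbolic geometry to parametrise round Hopf links, leveraging the contractibility of $\mathbb{H}^4$. First I identify $S^3$ with the ideal boundary of the Poincar\'e ball model of $\mathbb{H}^4$, with origin $q_0$. Each round circle $C\subset S^3$ bounds a unique totally geodesic $2$-disc $P(C)\subset\overline{\mathbb{H}^4}$, and $C$ is a great circle if and only if $q_0\in P(C)$. For a round Hopf link $(C_1,C_2)\in\rlink^\mob(H,S^3)$, I claim that the planes $P_1=P(C_1)$ and $P_2=P(C_2)$ meet transversely in a single point $q=q(C_1,C_2)\in\mathbb{H}^4$: their algebraic intersection number in $\mathbb{H}^4$ equals the linking number $\pm 1$ of the boundaries so the intersection is nonempty, and two totally geodesic $2$-planes cannot share more than one point without containing the geodesic between them (whose endpoints would lie in $C_1\cap C_2$).

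As in the proof of Theorem~\ref{thm: explicit manifold for S3}, the oriented Grassmannian $\Gr_2^+(\R^4)$ is the space of ordered, oriented great Hopf links in $S^3$ of linking number $+1$, and is a $4$-fold cover of $\rlink(H,S^3)$ with deck group $G=\Z/2\times\Z/2$. The analogous construction with M\"obius round circles produces a $4$-fold cover $\widetilde{\rlink}^\mob(H,S^3)\to\rlink^\mob(H,S^3)$ with the same deck group, and the inclusion of covers is tautologically $G$-equivariant; since $G$ acts freely on both sides, it suffices to prove that the inclusion of covers is a homotopy equivalence.

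Next I define $\pi\colon\widetilde{\rlink}^\mob(H,S^3)\to\mathbb{H}^4$ by $(C_1,C_2)\mapsto q(C_1,C_2)$ and verify it is a fiber bundle using the transitive action of $\Mob^+(S^3)=\Isom^+(\mathbb{H}^4)$ on $\mathbb{H}^4$; the fiber $F_q$ over $q$ is the space of ordered, oriented pairs $(V_1,V_2)$ of $2$-planes in $T_q\mathbb{H}^4\cong\R^4$ with $V_1\oplus V_2=\R^4$ and combined positive orientation. The fiber $F=F_{q_0}$ deformation retracts onto its subspace $F_\perp$ of orthogonal pairs via the graph construction: write $V_2=\{T(y)+y:y\in V_1^\perp\}$ for the unique linear $T\colon V_1^\perp\to V_1$, and deform $V_2^s=\{(1-s)T(y)+y:y\in V_1^\perp\}$, ending at $V_2^1=V_1^\perp$. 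Since $\mathbb{H}^4$ is contractible the fiber inclusion $F_{q_0}\hookrightarrow\widetilde{\rlink}^\mob(H,S^3)$ is a homotopy equivalence, and under the identification $F_\perp\cong\Gr_2^+(\R^4)$ this exhibits the inclusion of covers as a composition of homotopy equivalences. Quotienting by the free $G$-action completes the proof.

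The main technical hurdle will be verifying the local triviality of $\pi$, which follows from the principal $\SO(4)$-bundle structure $\Isom^+(\mathbb{H}^4)\to\mathbb{H}^4$, together with careful bookkeeping of orientations to ensure that the deformation retraction within each fiber preserves the positive linking condition throughout.
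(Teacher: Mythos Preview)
Your argument is correct and arrives at the same conclusion as the paper, but by a genuinely different route. Both proofs pass to the $4$-fold cover of oriented, labelled links and use the hyperbolic model so that round circles correspond to totally geodesic $2$-planes in $\Hy^4$. The paper then deformation retracts the entire cover onto the subspace $Y$ of \emph{orthogonal} pairs (by adjusting the dihedral angle under a stereographic projection), identifies $Y$ as the homogeneous space $\Mob^+(\Sa^3)/(\Mob^+(\Sa^1)\times\Mob^+(\Sa^1))$, and compares this to $\Gr_2^+(\R^4)=\SO(4)/(\SO(2)\times\SO(2))$ via the map of fiber sequences coming from the homotopy equivalences $\SO(n)\hookrightarrow\Mob^+(\Sa^{n-1})$. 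You instead fiber the cover over $\Hy^4$ using the unique transverse intersection point of the two hyperbolic planes, straighten to orthogonal pairs \emph{inside the fiber over $q_0$} via the explicit linear graph deformation $V_2^s=\{(1-s)T(y)+y\}$, and let the contractibility of the base $\Hy^4$ do the work that the paper's M\"obius-versus-orthogonal comparison does. Your approach is more hands-on: it avoids appealing to $\SO(n)\simeq\Mob^+(\Sa^{n-1})$ and gives an explicit retraction formula (your observation that the determinant, and hence the combined orientation, is unchanged along the graph deformation is exactly the orientation bookkeeping you flag). The paper's approach, by contrast, makes the homogeneous-space structure on both sides transparent and would generalise more readily to analogous situations. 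Both are short; neither is clearly superior.
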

\begin{proof}
    Let $\Hy^4$ denote real hyperbolic $4$-space in the disc model. By Poincar\'e extension, we can identify $\Mob(\Sa^3)$ with $ \Isom(\Hy^4)$. Let $\Mob^+(\Sa^3)\leq \Mob(\Sa^3)$ denote the index 2 subgroup of orientation-preserving M\"obius transformations, and hence orientation-preserving isometries of $\Hy^4$.  Round circles in $\Sa^3$ bound uniquely determined hyperbolic planes $\Hy^2\subset \Hy^4$, and two round circles link once exactly when the corresponding hyperbolic 2-planes meet transversely at a single point.

    Let $\prlink^\Mob(H,S^3)^+$ be the subspace of oriented, labeled, round Hopf links. We first define a deformation retraction of $\prlink^\Mob(H,S^3)^+$ onto a subspace $Y$ where the two components are orthogonal. As in the proof Lemma \ref{lem:PRHL Homotopy type}, this can be achieved by changing the angle to be $\pi/2$ in $\R^3$ under any stereographic projection, since stereographic projection is conformal. Since $\Mob^+(\Sa^3)$ acts transitively on the set of pairs of orthogonal, oriented hyperbolic planes $\Hy^2\subset \Hy^4$ with stabilisers isomorphic to $\Isom^+(\Hy^2)\times\Isom^+(\Hy^2)\cong \Mob^+(\Sa^1)\times\Mob^+(\Sa^1),$ we get an identification  $Y=\Mob^+(\Sa^3)/(\Mob^+(\Sa^1)\times\Mob^+(\Sa^1)).$ On the other hand, since $\SO(n)\hookrightarrow \Mob(\Sa^{n-1})$ is a homotopy equivalence and we have a commutative diagram of fiber sequences
    \[\xymatrix@R=.5cm{
        \Mob^+(\Sa^1)\times\Mob^+(\Sa^1)\ar[r]& \Mob^+(\Sa^3)\ar[r]& Y\\
        \SO(2)\times \SO(2)\ar[r]\ar[u]&\SO(4)\ar[r]\ar[u]&\Gr_2^+(\R^4)\ar[u]
    }\]
    we see that $\Gr_2^+(\R^4)\xrightarrow{\sim} Y$. Lastly, the deformation retraction onto $Y$ and the inclusion $\SO(4)\hookrightarrow \Mob(\Sa^3)$ are equivariant with respect to the  action of $\langle \alpha,s\rangle$, hence \[\rlink^\Mob(H,S^3)\simeq Y/\langle\alpha,s\rangle\cong\Gr_2^+(\R^4)/\langle \alpha,s\rangle=\R P^2\times \R P^2.\qedhere \] 
\end{proof}
 \bibliography{mybib}{}
 \bibliographystyle{alpha}
\end{document}